\newtheorem{thm}{Theorem}[section]
\newtheorem{rem}{Remark}[section]
\newtheorem{cor}{Corollary}[section]
\journal{Journal of \LaTeX\ Templates}
\begin{document}

\begin{frontmatter}

\title{A Bayes interpretation of stacking for $\cal{M}$-complete and $\cal{M}$-open settings }


 \author{Tri Le \fnref{fn1}}
 \ead{tle20@unl.edu}

 \author{Bertrand Clarke \fnref{fn1}}
 \ead{bclarke3@unl.edu}

  \fntext[fn1]{Department of Statistics, University of Nebraska-Lincoln.  }

 \begin{abstract}
  In ${\cal{M}}$-open problems where no true model can be conceptualized, it is common to 
back off from modeling and merely seek good prediction. Even in ${\cal{M}}$-complete problems,
taking a predictive approach can be very useful.
Stacking is a model averaging procedure that gives a composite predictor by combining individual predictors from a list of models
using weights that optimize a cross-validation criterion.  We show that the
stacking weights also asymptotically minimize a posterior expected loss.   Hence we formally provide a Bayesian
justification for cross-validation.  Often the weights
are constrained to be positive and sum to one.  For greater generality, we omit the positivity constraint
and relax the `sum to one' constraint.

A key question is `What predictors should be in the average?'
We first verify that the stacking error depends only on the span of the models.
Then we propose using bootstrap samples from the data to  generate empirical 
basis elements that can be used to form models. We use this in two computed examples
to give stacking predictors that are (i) data driven, (ii) optimal
with respect to the number of component predictors, and (iii) optimal with respect
to the weight each predictor gets.
 
\end{abstract}

\begin{keyword}
stacking \sep cross-validation \sep Bayes action \sep prediction \sep problem classes \sep optimization constrains
\end{keyword}

\end{frontmatter}


\section{Introduction}
Stacking is a model averaging procedure for generating predictions first introduced by 
\citet{wolpert}.  The basic idea is that if $J$ candidate signal plus noise models of the form
$Y = f_j(x) + \epsilon$ for $j= 1, \ldots ,  J$ are available then
they can be usefully combined to give the predictor
$$
\hat{Y}(x) = \sum_{j=1}^J \hat{w}_j \hat{f}_j(x) ,
$$
where $\hat{f}_j$ is an estimate of $f_j$.  Usually, $f_j(x)  = f_j(x, \beta_j)$ 
so $\hat{f}_j(x) = f_j( x , \hat{\beta}_j)$ where $\hat {\beta}_j$ is an estimate of $\beta_j$.
The weights $\hat{w} =  (\hat{w}_1, \ldots , \hat{w}_J)$ satisfy
\begin{eqnarray}
\hat{w} = \arg \min_{w} \sum_{i=1}^n\left( y_i - \sum_{j=1}^J w_j \hat{f}_{j ,-i}(x_i) \right)^2
\label{stackcrit}
\end{eqnarray}
where $\hat{f}_{j,-i}$ is the estimate of $f_j$ using the $n-1$ of the $n$ data points by dropping the $i$-th
one, i.e., $(x_1, y_1), \ldots, (x_{i-1}, y_{i-1}), (x_{i+1}, y_{i+1}), \ldots , (x_n, y_n)$.  The $Y_i$'s are
assumed independent and the $x_i$'s are deterministic design points.  Often the $w_j$'s are
assumed to be non-negative and sum to one.
The properties of stacking as a predictor have been explored in numerous contexts such as regression
\citet{breiman}, \citet{clarke}, \citet{sill}, classification and distance learning \citet{ting}, \citet{ozay},  density estimation \citet{smyth},
and estimating bagging's error rate \citet{rokach}, \citet{macready}.

These earlier contributions treated stacking as a frequentist procedure.  However, more recently, \citet{clyde}
brought stacking into the Bayesian paradigm.   They recalled the tripartite partition of statistical 
problems into three classes namely ${\cal{M}}$-closed, ${\cal{M}}$-complete, ${\cal{M}}$-open, see \citet{bernardo},
and suggested that outside the ${\cal{M}}$-closed setting the posterior risk could be approximated by a cross-validation error
(for the same loss function).  Hence the action minimizing the posterior risk could be approximated by 
the stacking predictor that minimizes \eqref{stackcrit}.  More precisely, given models $M_j$ for $j=1, \ldots , J$,
 a loss function $\ell$, a vector of responses $\bm{Y}=\bm{y} = (y_1, \ldots, y_n)$, and an element $a(\bm{y})$ in the
action appropriate for a collection of models, say ${\cal{M}}$, \citet{clyde} used
\begin{eqnarray}
\int  \ell(y_{n+1} , a(\bm{y})) p(y_{n+1} \mid \bm{y}) {\rm d} y_{n+1}  \approx \frac{1}{n} \sum_{i=1}^n \ell( y_i, a(\bm{y}_{-i}))
\label{Clydeapprox}
\end{eqnarray}
in an ${\cal{M}}$-open context,
where $y_{n+1}$ represents a future outcome at a future design point $x_{n+1}$, $\bm{y}_{-i}$ is the data vector
$\bm{y}$ with the $i$-th entry deleted, and $p( \cdot \mid \bm{y})$ is the predictive distribution for $Y_{n+1}$.
Here and elsewhere, the design points $x_1, \ldots , x_{n+1}$ are suppressed in the notation unless consideration of
them is essential for a step in a proof.
Hence, \citet{clyde} observed that minimizing the left hand side of \eqref{Clydeapprox} over $a(\bm{y})$ and the right hand side over $a(\bm{y}_{-i})$
leads to two actions that are asymptotically identical.  Otherwise put, the stacking predictor is the asymptotic
Bayes action for ${\cal{M}}$-complete problems. It is not the Bayes action in the  ${\cal{M}}$-open case because the mode of convergence is
undefined. Nevertheless,   \citet{clyde} used  \eqref{Clydeapprox} in an  ${\cal{M}}$-open context to good effect.  It should be noted that \eqref{Clydeapprox}
seems to have been initially conjectured in \citet{bernardo} and a non-cross-validatory version of
\eqref{Clydeapprox} for individual models $M_j$, namely
$$
E_{Y_{n+1} \mid \bm{Y}, M_j} \ell( Y_{n+1}, a_{M_j}(\bm{Y})) - \frac{1}{n} \sum_{i=1}^n \ell( Y_i, a_{M_j}(\bm{Y})) \stackrel{P}{\rightarrow}  0,
$$
is established in \citet{walker} where $a_{M_j}(\bm{Y})$ is in the action space associated with $M_j$.

Aside from the applications of these results to the stacking predictor, the results -- if proved formally as below --
establish that leave-one-out cross validation is asymptotically a Bayes optimal procedure under some conditions.
It can be verified that the proofs below extend to leave-$k$-out cross-validation as well.  That is, our results provide
a Bayesian justification for using cross-validation as a way to choose a model from
which to generate predictions outside
of ${\cal{M}}$-closed problems.

For the sake of completeness, we recall that \citet{bernardo} define ${\cal{M}}$-closed problems
as those for which a true model can be identified and written down but is one amongst finitely
many models from which an analyst has to choose.  By contrast, ${\cal{M}}$-complete problems
are those in which a true model (sometimes called a belief model) exists but is
inaccessible in the sense that even though it can be conceptualized
it cannot be written down or at least cannot be used
directly.  Effectively this means that other surrogate models must be identified
and used for inferential purposes.  ${\cal{M}}$-open problems according to \citet{bernardo}
are those problems where a true model exists but cannot be specified at all.  

Here however, we make a stronger distinction between ${\cal{M}}$-complete and ${\cal{M}}$-open problems
by taking the view that in the ${\cal{M}}$-open
case no true model can even be conceptualized.  Hence it is inappropriate to assume the existence of a true model.
We prefer this stronger distinction because it ensures that
${\cal{M}}$-complete and ${\cal{M}}$-open are disjoint classes.  In both ${\cal{M}}$-complete
and ${\cal{M}}$-open classes the status of the prior is unclear because none of the models under consideration
are taken is true.   However, a weighting function ostensibly indistinguishable from a prior can be regarded as a sort of 
pseudo-belief in the sense that 
it is the weight one would pre-experimentally assign to the model  if it were an action for predicting the outcomes of 
a data generator.     More generally, the weights can only be interpreted as an index for a class of actions, provided
the weighted combination of predictors from the $J$ models is regarded as an action.

Although the $w_j$'s are often assumed to be positive and sum to one e.g., \citet{clyde}, \citet{breiman} only assumed the
weights were positive and some remarks in \citet{clyde} consider the case that the weights only satisfy a `sum to one'
constraint thereby permitting negative weights.  We can see the effect of the sum to one constraint in a simple
example.  Following \citet{clydeslide}, consider the two models $M_1: Y = x_1\beta_1 + \epsilon$ and $M_2: Y = x_2\beta_2 + \epsilon$ where
the explanatory variables are orthogonal i.e., $x_1' x_2 = 0$.    As shown in the Appendix at the end of the paper, if we stack these two 
models with the sum to one constraint we get $\hat{w}_1 = \hat{w}_2 = 1/2$.  That is,
 predictions are generated from $Y_W = (1/2) x_1 \hat{\beta}_1 + (1/2) x_2 \hat{\beta}_2$ where the $\hat{\beta}_k$
are found from model $M_k$ for $k=1, 2$.  
On the other hand, if we stack $M_1$ and $M_2$ without the sum to one constraint but with, say, a sum to two
constraint we get $Y_{WO} = x_1\hat{\beta}_1 + x_2\hat{\beta}_2+ \epsilon$, i.e.,  $\hat{w}_1 = \hat{w}_2 = 1$.
Obviously, $Y_{WO} = 2 Y_W$ so $Y_W$ is half the size it should be.  This extends to three or more models and
shows that the sum to one constraint can be too restrictive.  In addition, permitting $w_j$'s to be negative increases
the range of the stacking predictors and can only result in better predictions.  Consequently in most of our results below
we do not impose either the sum to one constraint or the non-negativity constraint.

There three main contributions of this paper are (1) a formal proof that \eqref{Clydeapprox} holds for several
loss functions in  ${\cal{M}}$-complete settings, (2) explicit formulae for the stacking weights for various choices of constraints on the $w_j$'s,
and (3) a way to choose optimal basis expansions to stack so as to clarify the suggestion in \citet{breiman} that the
models be chosen as different from each other as possible. In our examples, we choose two data generators, one  ${\cal{M}}$-complete and one
${\cal{M}}$-open, to see how stacking performs.

The structure of this paper is as follows. In Section \ref{postexputil} we present the formal proof of using cross-validation to approximate posterior risk and hence
derive stacking as an approximation to the Bayes action.
In Section \ref{derivation}, we use the approximation to the posterior expected risk to derive stacking weights under several sets of constraints
on the weights, observing that relaxing the non-negativity and the sum to one constraints improves prediction.
In Section \ref{basisselection}, we show how to get optimal data-driven basis expansions to stack. 
These bases should be different from each other in the sense of being independent;
orthogonality does not seem to be helpful. 
In Section \ref{computedexamples}, we give two real data examples where the ${\cal{M}}$-complete or ${\cal{M}}$-open assumption is reasonable.
We use them to show the effect of the sum of the coefficients and to suggest desirable properties of  basis element generation.
Some concluding remarks are made in Section \ref{Discussion}. 
\section{Approximating posterior risk}
\label{postexputil}
Let ${\cal{M}} = \{ M_1, \ldots ,  M_J\}$ be a class of models and $\bm{y} = (y_1,\cdots,y_n)$ be the vector of outcomes of $\bm{Y} = (Y_1, \ldots , Y_n)$, where the $Y_i$'s are 
independently distributed with  probability density function (pdf) $p_j(y\mid {\theta}_j)$ for $j=1 \ldots , J$ 
equipped with a prior $w_j({\theta}_j)$. Consider a loss function $\ell: \mathbb{R} \times {\cal{A}} \rightarrow \mathbb{R}$
where ${\cal{A}}$ is the action space of a predictive decision problem.  In this setting $\ell(y_{n+1},a(\bm{y}))$ is the cost of taking action $a(\bm{y})$,
where $y_{n+1}$ is a future observation.    Although the language of utility functions is more common in our context, we prefer the language of loss functions 
because it is more suggestive of decision theory. The posterior risk under model $j$ is
\begin{equation}
\int \ell( y_{n+1},a( \bm{y} ) ) p_j(y_{n+1} \mid \bm{y} ) d y_{n+1},
\end{equation}
where $p_j( \cdot \mid \bm{y} )$ is the predictive density from model $j$.  Given a set of convex weights $\pi(j)$ for use over the models, the 
overall posterior risk is
\begin{equation}
 \int \ell( y_{n+1},a( \bm{y} ) ) p(y_{n+1} \mid \bm{y} ) d y_{n+1}
\end{equation}
where $p(y_{n+1} \mid \bm{y})$ is the predictive density marginalizing out over $j$ as well as the $\theta_j$'s.

The relationship between the notation in \eqref{stackcrit} and the above is that
if $Y = f_j(x) + \epsilon$ we can write $f_j$ in a generic parametric form $f_j(x) = f_j(x, \beta_j)$ so that
$Y \sim p_j(y \mid {\theta}_j)$ means $Y \sim p_j(y \mid x, {\theta}_j)$ where ${\theta}_j$ is
the concatenation of $\beta_j$ and the parameters in the distribution of $\epsilon$.  
We also assume without further comment that (i) the explanatory variable $x$ and the parameter $\theta_j$ are of fixed dimension,
and, for simplicity of notation, (ii) $(x, \theta_j) \in K_1 \times K_2$
where $K_1$ is a compact set in the space of explanatory variables and $K_2$ is a compact set.  Strictly speaking,
$K_2$ depends on $j$, but we assume that a single $K_2$ can be found and used for all $j$.  This latter regularity condition can be relaxed 
at the cost of more notation.  As a separate issue, because the Bayes predictors require integration over $\theta$,
the compactness of $K_2$ is only needed for the frequentist results.

In the results below we establish six versions of \eqref{Clydeapprox} using three different loss functions (squared error,
absolute error, and logarithmic loss -- also sometimes called a logarithmic scoring rule) and two different classes of
predictor (Bayes and plug-in).   Bayes predictors are of the form
$E_j( Y_{n+1} \mid \bm{Y})$ and plug-in predictors are of the form $E_{\hat{\theta}_j} Y_{n+1}$ where $\hat{\theta}_j$ is an estimator
of the true value of $\theta_j$ using $\bm{Y}$. To an extent the proofs of these results are similar:  All of them use multiple
steps of the form `add and subtract the right extra terms, apply the triangle inequality, and bound
the result term-by-term', and conclude by invoking a uniform integrability condition.    One difference is that the results for the Bayes predictors 
invoke a martingale convergence theorem whereas plug-in predictors
add an extra step based on the consistency of the $\hat{\theta}_j$'s.  

We begin by giving conditions under which we can state and prove \eqref{Clydeapprox} for squared error and 
Bayes predictors; this provides formal justification for the methodology in \citet{clyde} in ${\cal{M}}$-complete problems.

\begin{thm}\label{thm1}
Let $\ell(z, a) = (z-a)^2$ denote squared error loss.  Assume
\begin{description} 
\item (i) For any $j=1, \ldots , J$ and any pre-assigned $\epsilon>0$,  
	$$
	E_j(Y^{4+\epsilon}) = \int \int y^{4+\epsilon} p_j(y\mid x, {\theta}_j) w_j({\theta}_j) d\theta_j d y < \infty, 
	$$ 
	and $E_j (Y^{4+\epsilon})$ is continuous for $x \in K_1$,
\item (ii) For each $j = 1, \ldots , J$, the conditional densities $p_j( y \mid x, \theta_j)$ are equicontinuous for $x \in K_1$ for
each $y$ and $\theta_j \in K_2$, and,
\item (iii) For each $j\ = 1, \ldots , J$, the Bayes predictor
	$$
	\hat{Y}_{j} = E_j(Y_{n+1} \mid\bm{Y}) = \int\int y_{n+1}  p_j(y_{n+1} \mid x_{n+1}, {\theta}_j) w_j({\theta}_j\mid\bm{Y}) d{\theta}_j d y_{n+1}
	$$
	is used to generate predictions at the $n+1$ step.
\end{description}
Then, for any action $a(\bm{Y})= \sum_{j=1}^J w_j \hat{Y}_j$, $w_j \in \mathbb{R}$ for all $j$, we have
\begin{equation}
\nonumber
	\int \ell(y_{n+1},a(\bm{Y})) p(y_{n+1}\mid\bm{Y}) d{y}_{n+1} - \frac{1}{n} \sum_{i=1}^n \ell(Y_i,a(\bm{Y}_{-i})) \stackrel{L^2}{\rightarrow} 0 \mbox{ as } n\rightarrow\infty.
\end{equation}

\end{thm}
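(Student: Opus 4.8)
The plan is to expand the squared-error loss and reduce the claim to three moment-matching statements. Let $m_1 = E(Y_{n+1}\mid\bm{Y})$ and $m_2 = E(Y_{n+1}^2\mid\bm{Y})$ be the first two moments of the predictive density $p(\cdot\mid\bm{Y})$. Expanding $\ell(y_{n+1},a(\bm{Y})) = y_{n+1}^2 - 2y_{n+1}a(\bm{Y}) + a(\bm{Y})^2$ and integrating gives the posterior risk $m_2 - 2a(\bm{Y})m_1 + a(\bm{Y})^2$, while the cross-validation average expands to $\frac1n\sum_i Y_i^2 - \frac2n\sum_i Y_i a(\bm{Y}_{-i}) + \frac1n\sum_i a(\bm{Y}_{-i})^2$. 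Subtracting term by term and using the triangle inequality, it suffices to show that each of $D_1 = m_2 - \frac1n\sum_i Y_i^2$, $D_2 = \frac2n\sum_i Y_i a(\bm{Y}_{-i}) - 2a(\bm{Y})m_1$, and $D_3 = \frac1n\sum_i a(\bm{Y}_{-i})^2 - a(\bm{Y})^2$ tends to $0$ in $L^2$.

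First I would dispose of the predictor terms. Since $\hat{Y}_j = \int E_j(Y_{n+1}\mid x_{n+1},\theta_j)\,w_j(\theta_j\mid\bm{Y})\,d\theta_j$ is the posterior mean of a fixed functional of $\theta_j$, it is a Doob martingale in $n$ and, by the martingale convergence theorem together with posterior consistency in the ${\cal{M}}$-complete setting, converges almost surely to a limit $\hat{Y}_j^\infty$. Deleting one point from $n$ leaves $n-1\to\infty$ observations, so each leave-one-out predictor $a(\bm{Y}_{-i})$ has the same limit as $a(\bm{Y}) = \sum_j w_j\hat{Y}_j$; this makes $\frac1n\sum_i a(\bm{Y}_{-i})^2$ and $a(\bm{Y})^2$ share a common limit and drives $D_3\to 0$. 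The same replacement of $a(\bm{Y}_{-i})$ by $a(\bm{Y})$ reduces the cross term to $2a(\bm{Y})\bigl(\frac1n\sum_i Y_i - m_1\bigr)$ plus a negligible remainder.

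It then remains to handle the empirical moments in $D_1$ and in the reduced $D_2$, namely $\frac1n\sum_i Y_i^2 \to m_2$ and $\frac1n\sum_i Y_i \to m_1$. Because the $Y_i$ are independent, a law of large numbers gives these averages the limits $\lim \frac1n\sum_i E(Y_i^k)$, and the $4+\epsilon$ moments in assumption (i) supply the uniform integrability that upgrades the convergence from in probability to $L^2$. Assumption (ii), equicontinuity of $p_j(y\mid x,\theta_j)$ in $x$, together with the compactness of $K_1$, is what lets me identify these empirical limits with the predictive moments $m_1,m_2$ by controlling how the moments vary across the deterministic design points $x_1,\dots,x_{n+1}$.

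The hard part will be the interaction of the leave-one-out structure with the $L^2$ mode of convergence. Because $D_2$ and $D_3$ involve an average over all $i$, I must control $\frac1n\sum_i\bigl(a(\bm{Y}_{-i}) - a(\bm{Y})\bigr)^2$ simultaneously in $i$ rather than for a single index, and I must keep the products $Y_i a(\bm{Y}_{-i})$ uniformly integrable so that the in-probability limits become $L^2$ limits; this is exactly where assumption (i) and the martingale apparatus do the work. The more delicate conceptual point is reconciling the limit of the empirical moments, which is governed by the observed design points, with the predictive moment at the future point $x_{n+1}$ — the equicontinuity in (ii) and the compactness of $K_1$ are the tools I would lean on to close this gap.
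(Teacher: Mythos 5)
Your overall architecture is sound and largely parallels the paper's: the paper also splits the difference into a ``full-data risk minus full-data empirical average'' piece and a ``full-data versus leave-one-out'' piece, and it also disposes of the leave-one-out piece by the Doob martingale argument ($E_j(Y_{n+1}\mid\bm{Y})$ and $E_j(Y_{n+1}\mid\bm{Y}_{-i})$ share the limit $E_j(Y_{n+1}\mid\sigma_\infty)$), with assumption (i) supplying uniform integrability of the relevant fourth powers. Your finer decomposition into $D_1,D_2,D_3$ is a legitimate variant of that.

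The genuine gap is in your treatment of $D_1$ and the reduced $D_2$, i.e., the claim that $\frac1n\sum_i Y_i^k$ converges to the predictive moment $m_k=E(Y_{n+1}^k\mid\bm{Y})$ ``because the $Y_i$ are independent, by a law of large numbers, with limit $\lim\frac1n\sum_i E(Y_i^k)$.'' Two problems. First, in this Bayesian setup the $Y_i$ are independent only conditionally on $(M_j,\theta_j)$; marginally they are exchangeable but dependent, so the unconditional LLN you invoke does not apply as stated. Second, and more importantly, the limit you write down is wrong: under conditional independence, $\frac1n\sum_i Y_i^k$ converges to the \emph{random} variable $E(Y^k\mid\sigma_\infty)$ (essentially the moment at the realized parameter), not to the deterministic prior-averaged quantity $\lim\frac1n\sum_i E(Y_i^k)$. (Take $Y_i\mid\theta\sim N(\theta,1)$, $\theta\sim N(0,1)$: then $\frac1n\sum Y_i\to\theta$ a.s.\ and $E(Y_{n+1}\mid\bm{Y})\to\theta$, while $\lim\frac1n\sum E(Y_i)=0$.) The entire content of the statement $D_1\to 0$ is that the predictive moment and the empirical moment converge to the \emph{same random limit}; this is exactly the nontrivial result of Walker that the paper cites and builds Step 1 around (a martingale convergence for $E(Y_{n+1}^k\mid\bm{Y})$ paired with a strong law for exchangeable sequences, both converging to $E(Y^k\mid\sigma_\infty)$), followed by the uniform-integrability upgrade from convergence in probability to $L^2$. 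Your sketch needs to replace the unconditional LLN with this conditional/exchangeable argument; as written, the argument would identify the two limits incorrectly and the conclusion $D_1\to0$ would not follow.
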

\begin{proof}
Fix a countable sequence $x_1, x_2, \ldots \in K_1$.
To establish the theorem, it is enough to show that as $n \rightarrow \infty$,
\begin{equation*}
	E_{\bm{Y}} \left[ 	\int \ell(y_{n+1},a(\bm{Y})) p(y_{n+1}\mid\bm{Y}) d{y}_{n+1} - \frac{1}{n} \sum_{i=1}^n \ell(Y_i,a(\bm{Y}_{-i}))\right] ^2 \rightarrow 0 \label{2.4.2.13}.
\end{equation*}
This can be done by adding and subtracting $(1/n) \sum_{i=1}^n \ell(Y_i,a(\bm{Y}))$ to see 
\begin{eqnarray} 
\label{2.4.2.14}
\begin{aligned}
	 & 2 E_{\bm{Y}}  \left[  	E_{Y_{n+1} \mid\bm{Y}} \ell(Y_{n+1},a(\bm{Y})) - \frac{1}{n} \sum_{i=1}^n \ell(Y_i,a(\bm{Y}))\right]^2 \\
	 & +2 E_{\bm{Y}} \left[  \frac{1}{n} \sum_{i=1}^n \ell(Y_i,a(\bm{Y})) - \frac{1}{n} \sum_{i=1}^n \ell(Y_i,a(\bm{Y}_{-i}))\right]^2   
\end{aligned}	 
\end{eqnarray}
bounds the left hand side of the last expression from above and goes to zero.

{\it Step 1:  The first term in (\ref{2.4.2.14}) has limit zero.}
To see this, recall that \citet{walker} establish 
\begin{eqnarray*}
	E_{Y_{n+1}\mid\bm{Y}, M_j} \ell(Y_{n+1},a_{M_j}(\bm{Y})) - \frac{1}{n} \sum_{i=1}^n \ell(Y_i,a_{M_j}(\bm{Y})) \stackrel{P}{\rightarrow} 0 \mbox{ as } n\rightarrow\infty,
\end{eqnarray*}   
for any $j$, where $E_{Y_{n+1}\mid\bm{Y}, M_j} $ denotes the conditional expectation with respect to $(Y_{n+1} \mid \bm{Y})$ within model $M_j$; 
see also \citet{clyde} for a proof valid under our hypotheses.  Summing over $j = 1, \ldots , J$ and recognizing that
$a_{M_j}$ is a function of $\bm{Y}$, and hence can be regarded as an element $a(\bm{Y})$ of the action space of ${\cal{M}}$, give
\begin{eqnarray*}
	E_{Y_{n+1}\mid\bm{Y} } \ell(Y_{n+1},a(\bm{Y})) - \frac{1}{n} \sum_{i=1}^n \ell(Y_i,a(\bm{Y})) \stackrel{P}{\rightarrow} 0 \mbox{ as } n\rightarrow\infty ,
\end{eqnarray*}   
and hence
\begin{eqnarray}
	\quad\left[ E_{Y_{n+1}\mid\bm{Y}} \ell(Y_{n+1},a(\bm{Y})) - \frac{1}{n} \sum_{i=1}^n \ell(Y_i,a(\bm{Y}))\right] ^2 \stackrel{P}{\rightarrow} 0 \mbox{ as } n\rightarrow\infty. \label{2.4.2.25}
\end{eqnarray}   
Since $\ell$ is squared error and $a(\bm{Y})=\sum_{j=1}^J w_j \hat{Y}_j$, the left hand side of (\ref{2.4.2.25}) is
\begin{eqnarray} 
	&&  \left\lbrace   \left[ E(Y_{n+1}^{2}\mid\bm{Y})-\frac{1}{n} \sum_{i=1}^n Y_i^2 \right] \right.\nonumber\\
	&&\quad\quad\quad\quad\left.+ 2\left(\sum_{j=1}^J w_j \hat{Y}_j \right) \left[  \frac{1}{n} \sum_{i=1}^n Y_i - E(Y_{n+1}\mid\bm{Y})    \right]          \right\rbrace ^2 \label{2.4.2.26}\nonumber\\
	\quad& = &\left\lbrace    E(Y_{n+1}^{2}\mid\bm{Y})+\left(-\frac{1}{n} \sum_{i=1}^n Y_i^2 \right)\right.\nonumber\\
	\quad&&\left.+ \left[ 2\left(\sum_{j=1}^J w_j E_j(Y_{n+1}\mid\bm{Y}) \right) \left( \frac{1}{n} \sum_{i=1}^n Y_i - E(Y_{n+1}\mid\bm{Y})    \right)\right] \right\rbrace ^2 ,
	\label{2.4.2.27}
\end{eqnarray} 
since $\hat{Y}_{j} = E_j(Y_{n+1}\mid\bm{Y})$ by Assumption (iii).
Now, using $(a+b+c)^2 \leq 3(a^2 + b^2 + c^2)$ on the right hand side of \eqref{2.4.2.27}, the left hand side of \eqref{2.4.2.25} is bounded by
\begin{eqnarray} 
	&&  3E^2(Y_{n+1}^{2}\mid\bm{Y}) + 3 \left( \frac{1}{n} \sum_{i=1}^n Y_i^2\right) ^2 \nonumber \\
	&& + 12 \left[ \sum_{j=1}^J w_j E_j(Y_{n+1}\mid\bm{Y})\right] ^2 \left[\frac{1}{n} \sum_{i=1}^n Y_i - E(Y_{n+1}\mid\bm{Y})  \right] ^2 \nonumber\\
	\quad\quad&& \le 3E(Y_{n+1}^{4}\mid\bm{Y})+ 3 \left( \frac{1}{n} \sum_{i=1}^n Y_i^2\right) ^2 \nonumber\\
	&&\quad+ 24 \left( \sum_{j=1}^J w_j^2\right)  \left[ \sum_{j=1}^J  E_j^2(Y_{n+1}\mid\bm{Y})\right] \nonumber\\
	&&\quad\quad\times\left[ \left(\frac{1}{n} \sum_{i=1}^n Y_i \right) ^2 + E^2(Y_{n+1}\mid\bm{Y}) \right]. \label{2.4.2.271}
\end{eqnarray}
Using $ab\le (a^2+b^2)/2$ in the third term of (\ref{2.4.2.271}) gives the new bound
\begin{eqnarray} 
\begin{aligned}
	&\quad  3E(Y_{n+1}^{4}\mid\bm{Y})+ 3 \left( \frac{1}{n} \sum_{i=1}^n Y_i^2\right) ^2 \\
	&	+ 12 \left( \sum_{j=1}^J w_j^2\right)  \left[ \sum_{j=1}^J  E_j^2(Y_{n+1}\mid\bm{Y})\right]^2 \\
		& +  12  \left( \sum_{j=1}^J w_j^2\right) \left[ \left(\frac{1}{n} \sum_{i=1}^n Y_i \right) ^2 + E^2(Y_{n+1}\mid\bm{Y}) \right]^2.  \label{2.4.2.272}
\end{aligned}
\end{eqnarray}
Using $(\sum_i a_i)^2 \leq n\sum_i a_i^2$ in the third and fourth terms of \eqref{2.4.2.272} and then applying Cauchy-Schwarz 
to the results gives the upper bound
\begin{eqnarray}  
\begin{aligned}
		&\quad 3E(Y_{n+1}^{4}\mid\bm{Y})+ 3 \left( \frac{1}{n} \sum_{i=1}^n Y_i^2\right) ^2 \\
		&\quad	+ 12 J \left( \sum_{j=1}^J w_j^2\right)  \left( \sum_{j=1}^J  E_j(Y_{n+1}^{4}\mid\bm{Y})\right)  \\
			&\quad + 24 \left( \sum_{j=1}^J w_j^2\right) \left( \frac{1}{n} \sum_{i=1}^n Y_i^2\right) ^2  
			+ 24 \left( \sum_{j=1}^J w_j^2\right)  E(Y_{n+1}^{4}\mid\bm{Y}) . \label{2.4.2.28}
	\end{aligned}
\end{eqnarray}

Next, we show that each term on the right in \eqref{2.4.2.28} is uniformly integrable.  First observe that, under
Assumption (i), $E(Y_{n+1}^{4}\mid\bm{Y})$ and $E_j(Y_{n+1}^{4}\mid\bm{Y})$ are uniformly integrable, see \citet{billingsley}, p. 498,
so terms one, three, and five in \eqref{2.4.2.28} are uniformly integrable.
Since terms two and four are nearly the same, it is enough to show term two is uniformly integrable.
Begin by noting that Jensen's inequality gives
\begin{eqnarray}   
	\left( \sum_{i=1}^n a_i\right) ^{1+\epsilon} \le n^\epsilon \left( \sum_{i=1}^n a_i^{1+\epsilon}\right), \label{21}
\end{eqnarray}
for any $\epsilon >0$ and $a_i \ge 0, i=1,\cdots,n$.  (Set $\varphi(x) = x^{1+\epsilon}$.)  Now, by Cauchy-Schwarz and \eqref{21}
\begin{eqnarray*} 
	\sup_n E \left[ \left( \frac{1}{n} \sum_{i=1}^n Y_i^2\right) ^2 \right] ^{1+\epsilon}
	&\le& \sup_n E \left( \frac{1}{n} \sum_{i=1}^n Y_i^4 \right) ^{1+\epsilon} \\
	&\le& \sup_n \frac{1}{n^{1+\epsilon}} E \left[ n^\epsilon \left( \sum_{i=1}^n Y_i^{4(1+\epsilon)} \right) \right]  \\
	&\le& \sup_x E \left( Y^{4(1+\epsilon)}\right)  < \infty,
\end{eqnarray*} 
by Assumption (i) where $Y$ denotes any $Y_i$ as a function of $x$.  Thus term two in \eqref{2.4.2.28} is uniformly integrable.

Since all the terms on the right hand side of (\ref{2.4.2.28}) are uniformly integrable,
\begin{eqnarray*}  
	&&\left[  	E_{Y_{n+1}\mid\bm{Y}} \ell(Y_{n+1},a(\bm{Y})) - \frac{1}{n} \sum_{i=1}^n \ell(Y_i,a(\bm{Y}))\right]^2 
\end{eqnarray*} 
is uniformly integrable and by (\ref{2.4.2.25}) has limit zero in probability. Thus
\begin{eqnarray}  
	E_{\bm{Y}}  \left[  	E_{Y_{n+1}\mid\bm{Y}} \ell(Y_{n+1},a(\bm{Y})) - \frac{1}{n} \sum_{i=1}^n \ell(Y_i,a(\bm{Y}))\right]^2 \rightarrow 0
\end{eqnarray} 
as $n\rightarrow\infty$ by the Theorem 25.12 in \citet{billingsley}. So the first term in \eqref{2.4.2.14} also goes to $0$ as $n\rightarrow\infty$.

{\it Step 2: Obtain a bound on the second term in \eqref{2.4.2.14}.}  Using $(\sum_i a_i)^2 \leq n\sum_i a_i^2$ it is seen that
the second term in \eqref{2.4.2.14} is bounded by twice
\begin{eqnarray}  
\label{2.4.2.15}
\begin{aligned}
	 \frac{1}{n} E_{\bm{Y}} \sum_{i=1}^n \left[  \ell(Y_i,a(\bm{Y})) -  \ell(Y_i,a(\bm{Y}_{-i}))\right]^2 .
\end{aligned}
\end{eqnarray} 
The terms in \eqref{2.4.2.15} depend on $x_1, \ldots , x_n \in K$ and the $i$-th term depends
on $x_i$ differently from how it depends on the $x_{i^\prime}$'s for $i^\prime \neq i$.  However, the sum
is symmetric in the $x_i$'s.

Recalling $\ell(z,a)= (z-a)^2$ and $a(\bm{Y})=\sum_{j=1}^J w_j \hat{Y}_j$, where $w_j$ is a coefficient assigned to model $M_j$, 
the  $i$-th term in \eqref{2.4.2.15} is
\begin{eqnarray}  \label{2.4.2-15}
\begin{aligned}
	& E_{\bm{Y}} \Bigg\{  \Bigg[ \bigg( \sum_{j=1}^J w_j \hat{Y}_j\bigg) ^2 - \bigg( \sum_{j=1}^J w_j \hat{Y}_{j,-i}\bigg) ^2 \Bigg]\Bigg.\\
	& \quad\quad\quad\quad\quad\quad+  \Bigg. 2Y_i\bigg( \sum_{j=1}^J w_j \hat{Y}_{j,-i} - \sum_{j=1}^J w_j \hat{Y}_j        \bigg)       \Bigg\}^2 . 
\end{aligned}
\end{eqnarray}

Using $(a+b)^2\le 2a^2 + 2b^2$, $(a^2 - b^2) = (a-b)(a+b)$ and Cauchy-Schwarz, \eqref{2.4.2-15} is bounded from above by
\begin{eqnarray} \label{2.4.2.15'}
\begin{aligned}
	& \quad\quad\quad 2 E_{\bm{Y}} \left[\left( \sum_{j=1}^J w_j \hat{Y}_{j} -  \sum_{j=1}^J w_j \hat{Y}_{j,-i}       \right) ^2 \left( \sum_{j=1}^J w_j \hat{Y}_{j} +  \sum_{j=1}^J w_j \hat{Y}_{j,-i} \right) ^2\right]\\
	&\quad\quad\quad + 8\sqrt{E_{\bm{Y}} Y_i^4} \left[E_{\bm{Y}} \left( \sum_{j=1}^J w_j \left( \hat{Y}_{j} -  \hat{Y}_{j,-i}  \right) \right) ^4\right]^{1/2}. 
\end{aligned}	
\end{eqnarray}

Re-arranging in \eqref{2.4.2.15'} and using Cauchy-Schwarz and Jensen's inequality repeatedly gives the upper bound
\begin{eqnarray}
	&& 2 E_{\bm{Y}} \left[ \sum_{j=1}^J w_j \left( \hat{Y}_{j} -  \hat{Y}_{j,-i}\right)  \right] ^2 \left[ \sum_{j=1}^J w_j \left( \hat{Y}_{j} +  \hat{Y}_{j,-i}\right)  \right] ^2\nonumber \\
	&&\quad+ 8 \sqrt{E_{\bm{Y}} Y_i^4} \left[E_{\bm{Y}} J^2 \left( \sum_{j=1}^J w_j^2\left( \hat{Y}_{j} -  \hat{Y}_{j,-i}         \right) ^2         \right) ^2\right]^{1/2}\nonumber \\
	&&\le 2 J \left( \sum_{j=1}^J w_j^2\right) ^2  
	   \left[   E_{\bm{Y}}  \sum_{j=1}^J  \left( \hat{Y}_{j} +  \hat{Y}_{j,-i}  \right) ^4 \ \right]^{1/2} 
	   \left[   E_{\bm{Y}}  \sum_{j=1}^J  \left( \hat{Y}_{j} -  \hat{Y}_{j,-i}         \right) ^4        \right]^{1/2}\nonumber\\
	 && \quad+ 8 J \sqrt{E_{\bm{Y}} Y_i^4} \left( \sum_{j=1}^J w_j^4 \right) ^{1/2}  \left[   E_{\bm{Y}} \sum_{j=1}^J  \left( \hat{Y}_{j} +  \hat{Y}_{j,-i}\right)^4  \right]^{1/2} .\nonumber
\end{eqnarray}
This last expression equals
\begin{eqnarray}
	&&   \left[2 J \left( \sum_{j=1}^J w_j^2\right) ^2  
	   \left(  E_{\bm{Y}}  \sum_{j=1}^J  \left( \hat{Y}_{j} +  \hat{Y}_{j,-i}         \right) ^4  \right)^{1/2} \right.\nonumber\\
	   &&\quad\quad\quad\quad\quad\left. +8 J \sqrt{E_{\bm{Y}} Y_i^4} \left( \sum_{j=1}^J w_j^4 \right) ^{1/2}  \right] 
	   \times
	 \left[E_{\bm{Y}}   \sum_{j=1}^J \left( \hat{Y}_{j} -  \hat{Y}_{j,-i}         \right) ^4 \right]^{1/2},  \label{2.4.2.16}
\end{eqnarray}
where $\hat{Y}_{j} = E_j (Y_{n+1} \mid \bm{Y})$ and $\hat{Y}_{j, -i} = E_j (Y_{n+1} \mid \bm{Y}_{-i})$ by Assumption (iii).  
Next, we show the
second factor in \eqref{2.4.2.16} goes to zero as $n \rightarrow \infty$.


{\it Step 3, Part 1:  The fourth power inside the expectation in \eqref{2.4.2.16} goes to zero almost everywhere.}
First observe that  there is a $\sigma$-field $\sigma_\infty$ so that $\sigma(Y_1, \ldots , Y_n) \nearrow \sigma_\infty$
as $n \rightarrow \infty$.  
Similarly, $\sigma(Y_1, \ldots , Y_{n-1})   \nearrow \sigma_\infty$.  Indeed, the $\sigma$-field generated by any $n-1$ of
the $Y_{-i}$'s converges to $\sigma_\infty$ for each $j$.   Since $E_jY_{n+1}$ is finite for all $j$,
the martingale convergence theorem, \citet{billingsley}  (Theorem 35.6 on p. 499), gives that, for each $j$, 
\begin{eqnarray}
E_j(Y_{n+1} \mid \bm{Y}) , E_j(Y_{n+1} \mid \bm{Y}_{-i}) \rightarrow E_j(Y_{n+1} \mid \sigma_\infty)
\nonumber
\end{eqnarray}
almost everywhere in $M_j$ as $n \rightarrow \infty$.  Hence,
 \begin{eqnarray}
E_j(Y_{n+1}\mid\bm{Y}) -  E_j(Y_{n+1}\mid\bm{Y}_{-i}) \rightarrow 0,
\label{AEconverge}
\end{eqnarray}
almost everywhere in $M_j$ for each of the $x_i$'s.
However, since all the models have the same sets of measure zero in the underlying measure space, convergence
\eqref{AEconverge} holds almost everywhere in the limit over the mixture of all $J$ models, i.e., with respect to $p(\bm{y})$
in a limiting sense.  Thus, the fourth powers also converge to zero almost everywhere in the mixture
of the $J$ models.

{\it Step 3, Part 2:  The fourth powers inside the expectation in \eqref{2.4.2.16} are uniformly integrable.}  By the Cauchy-Schwarz inequality,
\begin{eqnarray}	\label{upperbd}
	&&\sum_{j=1}^J \left( E_j(Y_{n+1}\mid\bm{Y}) -  E_j(Y_{n+1}\mid\bm{Y}_{-i})  \right) ^4 \nonumber\\
	&\le& 4 \sum_{j=1}^J \left( E_j(Y_{n+1}^{2}\mid\bm{Y}) +  E_j(Y_{n+1}^{2}\mid\bm{Y}_{-i})  \right) ^2\nonumber\\
	&\le& 8  \sum_{j=1}^J \left( E_j(Y_{n+1}^{4}\mid\bm{Y}) +  E_j(Y_{n+1}^{4}\mid\bm{Y}_{-i})\right).
\end{eqnarray}
So, by Assumption (i) and \citet{billingsley} (Lemma p. 498), we have that the right hand side is a uniformly integrable
sequence since the sequence of $x_i$'s in $K_1$ is regarded as a countable collection of fixed design points.  Assumption (ii) together with Step 3, Part 1,
gives
\begin{eqnarray}
	\quad E_{\bm{Y}} \left[ \sum_{j=1}^J \left( E_j(Y_{n+1}\mid\bm{Y}) -  E_j(Y_{n+1}\mid\bm{Y}_{-i})  \right) ^4 \right] \rightarrow 0 \mbox{ as } n\rightarrow\infty  \label{2.4.2.22}
\end{eqnarray}
uniformly in the $x_i$'s.  Thus, the bound on the $i$-th term in \eqref{2.4.2.15} is independent of $i$ and so is a valid upper bound that
goes to zero for all $n$ terms in \eqref{2.4.2.15}, therefore bounding the average.

To conclude the proof, we show that the first factor in \eqref{2.4.2.16} is uniformly bounded as a function of $n$ for $x_1, \dots , x_n$ in a compact set.  
Indeed, by Assumption (i), the expectation $E_{\bm{Y}} Y_i^4$ is
\begin{eqnarray}
	\quad\quad\quad\int y^4 \left( \sum_{j=1}^J \pi_j \int p_j(y\mid x_i, \theta_j) w_j(\theta_j) d\theta_j \right) dy
	= \sum_{j=1}^J \pi_j E_j(Y^4) < \infty. \label{2.4.2.23'} 
\end{eqnarray} 
Moreover, $\sum_{j=1}^J  \left( \hat{Y}_{j} +  \hat{Y}_{j,-i} \right) ^4$ is uniformly integrable for the same reason as
\eqref{upperbd} is and converges to
$$
16 \sum_{j=1}^J [E_j (Y_{n+1}\mid \sigma_\infty)]^4.
$$
So, by \citet{billingsley} (Theorem 25.12 p. 361)  $16 \sum_{j=1}^J [E_j (Y_{n+1}\mid \sigma_\infty)]^4$ is integrable and
$E_{\bm{Y}} \sum_{j=1}^J  \left( \hat{Y}_{j} +  \hat{Y}_{j,-i} \right) ^4 \rightarrow E_{\bm{Y}} 16 \sum_{j=1}^J [E_j (Y_{n+1}\mid \sigma_\infty)]^4$
giving that the first factor in \eqref{2.4.2.16} is bounded.

Taken together, these statements imply that the second term in \eqref{2.4.2.14} goes to $0$ as $n\rightarrow\infty$. 

\end{proof}
The result in Theorem \ref{thm1} remains true for squared error loss if we use plug-in predictors of the form
$$
\hat{Y}_{j} = E_{\hat{\theta}_j(\bm{Y})}(Y_{n+1}) = \int y_{n+1} p_{\hat{\theta}_j(\bm{Y})}(y_{n+1}) d y_{n+1},
$$
where $\hat{\theta}_j$ is any consistent estimator for $\theta_j$ in $M_j$ 
rather than Bayes predictors as in Assumption (iii). 
This assertion is in the following. 
\begin{thm}\label{thm2}
Let $\ell(z, a) = (z-a)^2$ denote squared error loss.  Assume
\begin{description} 
\item (i) For any $j=1, \ldots , J$ and any pre-assigned $\epsilon>0$,  
	$$
	E_j(Y^{4+\epsilon}) = \int \int y^{4+\epsilon} p_j(y\mid x, {\theta}_j) w_j({\theta}_j) d\theta_j d y < \infty, 
	$$ 
	and $E_j (Y^{4+\epsilon})$ is continuous for $x \in K_1$,
\item (ii) For each $j = 1, \ldots , J$, the conditional densities $p_j( y \mid x, \theta_j)$ are equicontinuous for $ x \in K_1$ for
each $y$ and $\theta_j \in K_2$, and,
\item (iii) For each $j\ = 1, \ldots , J$, let the plug-in predictor
	$$
	\hat{Y}_{j} = E_{\hat{\theta}_j(\bm{Y})}(Y_{n+1}) = \int y_{n+1} p_{\hat{\theta}_j(\bm{Y})}(y_{n+1}) d y_{n+1}, 
	$$ 
	be used to generate predictions at the $n+1$ step, where $\hat{\theta}_j(\bm{Y})$ is a consistent estimator for $\theta_j$, and
\item (iv) For each $j = 1, \ldots , J$, $E_{\theta_j}(Y_{n+1})$ and $E_{\theta_j}(Y_{n+1}^{4})$ are continuous for $\theta_j\in\Theta_j$, 
where $\Theta_j \subset K_2$ is a 
compact parameter space for $M_j$.
\end{description}
Then, for any action $a(\bm{Y})= \sum_{j=1}^J w_j \hat{Y}_j$, $w_j \in \mathbb{R}$ for all $j$, we have
\begin{equation}
\nonumber
	\int \ell(y_{n+1},a(\bm{Y})) p(y_{n+1}\mid\bm{Y}) d{y}_{n+1} - \frac{1}{n} \sum_{i=1}^n \ell(Y_i,a(\bm{Y}_{-i})) \stackrel{L^2}{\rightarrow} 0 \mbox{ as } n\rightarrow\infty.
\end{equation}
\end{thm}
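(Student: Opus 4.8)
The plan is to reuse the architecture of the proof of Theorem \ref{thm1} essentially verbatim: reduce the claim to showing the second moment of the difference tends to zero, add and subtract $(1/n)\sum_{i=1}^n \ell(Y_i,a(\bm{Y}))$, and apply $(a+b)^2 \le 2a^2+2b^2$ so that it suffices to drive the two terms in \eqref{2.4.2.14} to zero separately. The Bayes form $\hat Y_j = E_j(Y_{n+1}\mid\bm{Y})$ entered that proof in only two places, and I would adjust exactly those, leaving every other estimate intact.

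For the first term in \eqref{2.4.2.14}, I would expand the squared-error loss so that the quantity inside the expectation becomes $[E(Y_{n+1}^2\mid\bm{Y}) - (1/n)\sum_i Y_i^2] - 2a(\bm{Y})[E(Y_{n+1}\mid\bm{Y}) - (1/n)\sum_i Y_i]$. The two bracketed moment differences converge to zero in probability by \citet{walker} (and \citet{clyde}), and this step is insensitive to the form of the action. The only new point is controlling the multiplier $a(\bm{Y}) = \sum_j w_j E_{\hat{\theta}_j(\bm{Y})}(Y_{n+1})$: by Assumption (iv), $E_{\theta_j}(Y_{n+1})$ is continuous on the compact $\Theta_j$ and hence uniformly bounded, so $a(\bm{Y})$ is bounded by a constant. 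Together with the moment hypothesis (i), the analog of the bound \eqref{2.4.2.28} is uniformly integrable, and convergence in probability upgrades to $L^2$ by \citet{billingsley} (Theorem 25.12) exactly as in Step 1 of Theorem \ref{thm1}.

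For the second term, the single genuinely new ingredient replaces the martingale convergence that produced \eqref{AEconverge}. In its place I would invoke consistency: by Assumption (iii) $\hat{\theta}_j(\bm{Y}) \to \theta_j$, and since each leave-one-out sample retains $n-1$ design points the same estimator applied to $\bm{Y}_{-i}$ gives $\hat{\theta}_j(\bm{Y}_{-i}) \to \theta_j$ as well. Because $E_{\theta_j}(Y_{n+1})$ is continuous in $\theta_j$ (Assumption (iv)), the continuous mapping theorem yields $\hat Y_j \to E_{\theta_j}(Y_{n+1})$ and $\hat Y_{j,-i} \to E_{\theta_j}(Y_{n+1})$, whence $\hat Y_j - \hat Y_{j,-i} \to 0$, the plug-in analog of \eqref{AEconverge}. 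The remaining estimates are in fact easier here than in the Bayes case: since $E_{\theta_j}(Y_{n+1})$ is bounded on $\Theta_j$, the plug-in predictors are uniformly bounded, so $(\hat Y_j - \hat Y_{j,-i})^4$ is dominated by a constant and bounded convergence immediately delivers \eqref{2.4.2.22}, while combining this boundedness with the fourth-moment condition (i) shows the first factor in \eqref{2.4.2.16} is bounded. These together force the second term in \eqref{2.4.2.14} to zero.

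The main obstacle I anticipate is making the convergence $\hat Y_j - \hat Y_{j,-i} \to 0$ uniform in $i$, since it is this uniformity that lets a single bound on the $i$-th summand control the full average $(1/n)\sum_{i=1}^n$ in \eqref{2.4.2.15}. Consistency is normally phrased for the full sample, so I would argue that by the exchangeability of the retained observations each $\hat{\theta}_j(\bm{Y}_{-i})$ has the law of an estimator built from $n-1$ design points, making both the convergence and its dominating constant genuinely independent of $i$. Carefully justifying this symmetry step, and the attendant transfer of full-sample consistency to every leave-one-out subsample at a rate not depending on $i$, is the part that requires the most care.
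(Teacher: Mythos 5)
Your proposal is correct and follows essentially the same route as the paper: the paper's own (very terse) proof likewise reuses the architecture of Theorem \ref{thm1}, uses Assumption (iv) to get uniform integrability of the plug-in moments for the first term of \eqref{2.4.2.14}, and replaces the martingale step by the consistency-plus-continuity argument $E_{\hat{\theta}_j(\bm{Y})}(Y_{n+1}) \to E_{\theta_j}(Y_{n+1})$ (stated there as \eqref{extrastep}) for the second term. Your additional observation that continuity of $E_{\theta_j}(Y_{n+1})$ on the compact $\Theta_j$ makes the plug-in predictors uniformly bounded -- so that domination is immediate -- and your flagged concern about uniformity in $i$ of the leave-one-out consistency are both reasonable refinements of, not departures from, the paper's argument.
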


\begin{proof}
Fix a countable sequence $x_1, x_2, \ldots \in K_1$.
Recall expression \eqref{2.4.2.14}.  The first term is treated much the same as in Theorem \ref{thm1}.  The main difference is that 
Assumption (iv) can be used to establish uniform integrability of $E_{\hat{\theta}_j(\bm{Y})}Y_{n+1}^4$ because
it gives $\sup_n E_{\bm{Y}} \left[  E_{\hat{\theta}_j(\bm{Y}) }(Y_{n+1}^4) \right]^2 < \infty$.

Showing the second term in \eqref{2.4.2.14} goes to zero uses the plug-in estimator version of \eqref{2.4.2.16}.  Showing it goes to
zero requires 
\begin{eqnarray} 
\label{extrastep}
	E_{\hat{\theta}_j(\bm{Y})} (Y_{n+1}) \stackrel{L_1}{\rightarrow} E_{\theta_j}(Y_{n+1}) \mbox{ as } n\rightarrow\infty
\end{eqnarray}
which uses Assumption (iv) for the first moment (rather than a martingale argument) and a uniform integrability argument similar
to that used in Step 3 of the proof of Theorem \ref{thm1}.

\end{proof}

Unsurprisingly, the conclusions of Theorems \ref{thm1} and \ref{thm2}  continue to hold if the squared error is replaced by
the absolute error $\ell(z,a) =  |z-a|$.   We state the Bayes and plug-in versions for absolute error in the following.

\begin{thm}\label{thm3}
Let $\ell(z, a) = |z-a|$ denote absolute error loss.  Assume
\begin{description} 
\item (i) For any $j=1, \ldots , J$ and any pre-assigned $\epsilon>0$,  
	$$
	E_j(Y^{2+\epsilon}) = \int \int y^{2+\epsilon} p_j(y\mid x, {\theta}_j) w_j({\theta}_j) d\theta_j d y < \infty, 
	$$ 
	and $E_j (Y^{2+\epsilon})$ is continuous for $x \in K_1$,
\item (ii) For each $j = 1, \ldots , J$, the conditional densities $p_j( y \mid x, \theta_j)$ are equicontinuous for $x \in K_1$ for
each $y$ and $\theta_j \in K_2$, and,
\item (iii) For each $j = 1, \ldots, J$, let either the Bayes or the plug-in predictor from Theorem \ref{thm1} or Theorem \ref{thm2}, respectively,
be used to generate predictions at the $n+1$ time step. 
\item (iv) If plug-in predictors are chosen in Assumption (iii), then assume in addition that
for each $j = 1, \ldots , J$, $E_{\theta_j}(Y_{n+1})$ and $E_{\theta_j}(Y_{n+1}^{2})$ are continuous as functions of $\theta_j\in\Theta_j$, where $\Theta_j \subset K_2$ is a 
compact parameter space for $M_j$.
\end{description}
Then, for any action $a(\bm{Y})= \sum_{j=1}^J w_j \hat{Y}_j$, $w_j \in \mathbb{R}$ for all $j$, we have
\begin{equation}
\nonumber
	\int \ell(y_{n+1},a(\bm{Y})) p(y_{n+1}\mid\bm{Y}) d{y}_{n+1} - \frac{1}{n} \sum_{i=1}^n \ell(Y_i,a(\bm{Y}_{-i})) \stackrel{L^2}{\rightarrow} 0 \mbox{ as } n\rightarrow\infty.
\end{equation}
\end{thm}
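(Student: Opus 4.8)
The plan is to follow the same two-term decomposition used for squared error. Writing $a(\bm{Y}) = \sum_{j=1}^J w_j \hat{Y}_j$ and adding and subtracting $(1/n)\sum_{i=1}^n \ell(Y_i, a(\bm{Y}))$, the quantity to be controlled is again bounded above by twice the sum of the two terms in \eqref{2.4.2.14}, now with $\ell(z,a) = |z-a|$. As before it suffices to show each of these two terms has limit zero, and the two steps run closely parallel to Steps 1 and 3 of Theorem \ref{thm1}, with the plug-in case handled by the modification in Theorem \ref{thm2}.

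For the first term, I would proceed exactly as in Step 1 of Theorem \ref{thm1}: the result of \citet{walker} is stated for a general loss, so combining it across the $J$ models (as there) gives
\[
E_{Y_{n+1}\mid\bm{Y}}\,|Y_{n+1} - a(\bm{Y})| - \frac{1}{n}\sum_{i=1}^n |Y_i - a(\bm{Y})| \stackrel{P}{\rightarrow} 0,
\]
and hence convergence to zero in probability of its square. Upgrading to $L^2$ requires uniform integrability of that square. Here I would use $|Y_{n+1} - a(\bm{Y})| \le |Y_{n+1}| + (\sum_j w_j^2)^{1/2}(\sum_j \hat{Y}_j^2)^{1/2}$, the conditional Jensen bound $\hat{Y}_j^2 = [E_j(Y_{n+1}\mid\bm{Y})]^2 \le E_j(Y_{n+1}^2\mid\bm{Y})$, and the companion inequality $[(1/n)\sum_i |Y_i - a(\bm{Y})|]^2 \le (1/n)\sum_i (Y_i - a(\bm{Y}))^2$. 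Expanding the squares reduces everything to the terms $E(Y_{n+1}^2\mid\bm{Y})$, $E_j(Y_{n+1}^2\mid\bm{Y})$, and $(1/n)\sum_i Y_i^2$, each of which is uniformly integrable under Assumption (i) by the very argument of Theorem \ref{thm1} (using \eqref{21}), the only change being that second rather than fourth moments appear, which is exactly why Assumption (i) here asks only for $2+\epsilon$ moments. Theorem 25.12 of \citet{billingsley} then promotes the in-probability limit to the desired $L^2$ limit.

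The second term is where absolute error is actually more convenient than squared error. The reverse triangle inequality gives the pointwise bound
\[
\Big|\,|Y_i - a(\bm{Y})| - |Y_i - a(\bm{Y}_{-i})|\,\Big| \le |a(\bm{Y}) - a(\bm{Y}_{-i})| = \Big|\sum_{j=1}^J w_j\big(\hat{Y}_j - \hat{Y}_{j,-i}\big)\Big|,
\]
so the $Y_i$ cross terms that complicated \eqref{2.4.2-15} never appear. Using $(\sum_i a_i)^2 \le n\sum_i a_i^2$ and then Cauchy-Schwarz on the inner sum, the second term in \eqref{2.4.2.14} is bounded by a constant multiple of $(\sum_j w_j^2)\, E_{\bm{Y}} \sum_{j=1}^J (\hat{Y}_j - \hat{Y}_{j,-i})^2$, uniformly in $i$. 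I would then show this expectation tends to zero exactly as in Step 3 of Theorem \ref{thm1}: in the Bayes case $\hat{Y}_j - \hat{Y}_{j,-i} \to 0$ almost everywhere by the martingale convergence argument leading to \eqref{AEconverge}, while the squares $(\hat{Y}_j - \hat{Y}_{j,-i})^2 \le 2E_j(Y_{n+1}^2\mid\bm{Y}) + 2E_j(Y_{n+1}^2\mid\bm{Y}_{-i})$ are uniformly integrable under Assumption (i); in the plug-in case one replaces the martingale step by the consistency-based convergence \eqref{extrastep}, whose first-moment version now suffices, with uniform integrability supplied by the continuity in Assumption (iv).

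I expect no single step to be a serious obstacle: the novelty relative to Theorems \ref{thm1} and \ref{thm2} is confined to the non-smoothness of $|z-a|$, and that is dispatched cleanly by the reverse triangle inequality, which in fact removes the cross terms and lowers the moment requirement from four to two. The only point needing genuine care is the bookkeeping in the first-term uniform integrability, where one must verify that after expanding $|Y_{n+1}-a(\bm{Y})|^2$ and $[(1/n)\sum_i|Y_i-a(\bm{Y})|]^2$ every surviving term is dominated by a quantity already known to be uniformly integrable under the weaker $2+\epsilon$ moment hypothesis.
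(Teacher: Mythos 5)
Your proposal is correct and follows essentially the same route as the paper: the same two-term decomposition \eqref{2.4.2.14}, the Walker-based argument plus uniform integrability for the first term, and for the second term the reverse triangle inequality (the paper writes it as $(|a|-|b|)^2 \le (a-b)^2$) followed by Cauchy--Schwarz and the martingale (Bayes) or consistency (plug-in) step. Your added bookkeeping on why only $2+\epsilon$ moments are needed for the first term is a useful elaboration of a point the paper passes over by simply citing the proof of Theorem \ref{thm1}, but it is not a different method.
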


\begin{proof}
Fix a countable sequence $x_1, x_2, \ldots \in K_1$.
As before, we recall \eqref{2.4.2.14}.  Verifying that the first term goes to zero is the same as in the proof of Theorem \ref{thm1} for the Bayes 
posterior mean predictor and can be modified as explained in the proof of Theorem \ref{thm2} for the plug-in predictor.

The $i$-th summand in the second term in \eqref{2.4.2.14} is bounded by
\begin{eqnarray} 
	&& E_{\bm{Y}} \left(  | Y_i-\sum_{j=1}^J w_j \hat{Y}_j | -  | Y_i- \sum_{j=1}^J w_j \hat{Y}_{j,-i}|\right)^2  .
 \label{2.4.2.33}
\end{eqnarray}  
Since $(|a|-|b|)^2\le (a-b)^2$, (\ref{2.4.2.33}) is bounded from above by
\begin{eqnarray} 
	 E_{\bm{Y}} \left[  \sum_{j=1}^J w_j \left( \hat{Y}_{j} -  \hat{Y}_{j,-i} \right) \right] ^2. \label{2.4.2.34}
\end{eqnarray} 
Using Cauchy-Schwarz inequality, (\ref{2.4.2.34}) is upper bounded by	
\begin{eqnarray*}
\begin{aligned}
	&E_{\bm{Y}} \left\lbrace  \left( \sum_{j=1}^J w_j^2\right) \left[ \sum_{j=1}^J \left(  \hat{Y}_{j} -  \hat{Y}_{j,-i} \right)^2 \right]    \right\rbrace   
	=\left( \sum_{j=1}^J w_j^2\right)  \sum_{j=1}^J E_{\bm{Y}} \left(  \hat{Y}_{j} -  \hat{Y}_{j,-i} \right)^2. \label{2.4.2.35}
\end{aligned}
\end{eqnarray*}  

If the Bayes predictors are used, the last expression can be controlled by Step 3 in the proof of Theorem \ref{thm1}.
If the plug-in predictors are used, then the last expression can be controlled by the extra step mentioned in
the proof of Theorem \ref{thm2}, see \eqref{extrastep}.

\end{proof}

The so-called log-loss is qualitatively different from squared error or absolute error because log-loss can be 
positive or negative.  It is therefore better regarded  as a utility function even though in some cases it can be
physically interpreted as the code length function for a Shannon code.   Here we extend our results to the
log-utility to verify that cross-validation continues to remain an asymptotically Bayes procedure.
Now, an action $a$ is of the form
\begin{equation}
\label{BayesAct}
	a(Y_{n+1}\mid\bm{Y}) = \sum_{j=1}^J w_j p_j (Y_{n+1}\mid\bm{Y}),
\end{equation}
and the corresponding log-utility is
\begin{equation}
	u(Y_{n+1}, a(Y_{n+1}\mid\bm{Y})) = \log \left[ \sum_{j=1}^J w_j p_j (Y_{n+1}\mid\bm{Y})\right].
\end{equation}

As shown in our next result, cross-validation approximates the posterior expected utility of  
the Bayes action of the form \eqref{BayesAct} or the plug-in action of the form 
$a(Y_{n+1}\mid\bm{Y}) = \sum_{j=1}^J w_j p_{\hat{\theta}_j(\bm{Y})}(Y_{n+1})$ where
 $\hat{\theta}_j(\bm{Y})$ is a consistent estimator of $\theta_j \in K_2$ in $M_j$. 

\begin{thm}\label{thm4} Let $u(Y_{n+1},a(\bm{Y})) = \log \left[ \sum_{j=1}^J w_j p_j (Y_{n+1}\mid\bm{Y})\right]$ denote log-utility.
Assume
\begin{description} 
\item (i) For each $j = 1, \ldots, J$, there is a function $B_j(\cdot)$ so that
	$$
	\sup_{\bm{Y}} |\log p_j(Y_{n+1}\mid\bm{Y})| \le B_j(Y_{n+1}) < \infty,
	$$
	$B_j(\cdot)$ is independent of $x_1, x_2, \ldots$, and 
	$$
	E[g(Y_{n+1})] < \infty,
	$$
	where 
\begin{eqnarray*}
 \begin{aligned}
 	g(Y_{n+1}) = \max\left\{  \left(\log \sum_{j=1}^J w_j  e^{-B_j(Y_{n+1})}\right)^4, \left(\log  \sum_{j=1}^J w_j  e^{B_j(Y_{n+1})} \right)^4\right\}.
\end{aligned}
 \end{eqnarray*}
\item (ii) For each $j = 1, \ldots , J$, the conditional densities $p_j( y \mid x, \theta_j)$ are equicontinuous in $x$ for
each $y$ and $\theta_j \in \Theta_j \subset K_2$, and the predictive densities $p_j(y\mid\bm{Y}) $ within the $j$-th model
are uniformly equicontinuous in $y$.
\item (iii) For each $j = 1, \ldots, J$, let the Bayes action \eqref{BayesAct}
be used to generate predictions at the $n+1$ time step. 
\end{description}
Then,  we have
\begin{equation}
\nonumber
	\int u(y_{n+1},a(\bm{Y})) p(y_{n+1}\mid\bm{Y}) d{y}_{n+1} - \frac{1}{n} \sum_{i=1}^n u(Y_i,a(\bm{Y}_{-i})) \stackrel{L^2}{\rightarrow} 0 \mbox{ as } n\rightarrow\infty.
\end{equation}
If $p_{\hat{\theta}_j(\bm{Y})}(y_{n+1})$ where $\hat{\theta}_j(\bm{Y})$ is a consistent estimator of $\theta_j$ is used instead of $p_j(y_{n+1}\mid\bm{Y})$, 
the result still holds.
\end{thm}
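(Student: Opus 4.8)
The plan is to follow the same two-term decomposition used in the proofs of Theorems \ref{thm1}--\ref{thm3}. Writing $u$ for the log-utility, I would add and subtract $(1/n)\sum_{i=1}^n u(Y_i, a(\bm{Y}))$ inside the $L^2$ norm and use $(A-B)^2 \le 2A^2 + 2B^2$ to bound the target by twice the sum of
\begin{equation*}
T_1 = E_{\bm{Y}}\left[ E_{Y_{n+1}\mid\bm{Y}} u(Y_{n+1}, a(\bm{Y})) - \frac{1}{n}\sum_{i=1}^n u(Y_i, a(\bm{Y}))\right]^2
\end{equation*}
and
\begin{equation*}
T_2 = E_{\bm{Y}}\left[ \frac{1}{n}\sum_{i=1}^n u(Y_i, a(\bm{Y})) - \frac{1}{n}\sum_{i=1}^n u(Y_i, a(\bm{Y}_{-i}))\right]^2,
\end{equation*}
exactly as in \eqref{2.4.2.14}. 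It then suffices to show $T_1 \to 0$ and $T_2 \to 0$.

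For $T_1$, I would invoke the convergence result of \citet{walker} (see also \citet{clyde}) with the utility $u$ in the role of the loss, which gives that the bracketed quantity in $T_1$ tends to zero in probability. To upgrade this to $T_1 \to 0$ I need the bracket squared to be uniformly integrable, and here the bound in Assumption (i) does the work: since $|\log p_j(Y_{n+1}\mid\bm{Y})| \le B_j(Y_{n+1})$ uniformly in $\bm{Y}$, for the Bayes action \eqref{BayesAct} the argument of the log is squeezed between $\sum_j w_j e^{-B_j(Y_{n+1})}$ and $\sum_j w_j e^{B_j(Y_{n+1})}$, so $u(Y_{n+1}, a(\bm{Y}))^4 \le g(Y_{n+1})$ with $E[g(Y_{n+1})] < \infty$. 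This dominating function makes both the predictive-expectation term and the empirical average uniformly integrable, so Theorem 25.12 of \citet{billingsley} converts the in-probability limit into $T_1 \to 0$.

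For $T_2$, I would bound it by $(2/n)\sum_i E_{\bm{Y}}[u(Y_i, a(\bm{Y})) - u(Y_i, a(\bm{Y}_{-i}))]^2$ via $(\sum_i a_i)^2 \le n\sum_i a_i^2$, as in \eqref{2.4.2.15}, and show each summand tends to zero uniformly in $i$. The key observation is that the predictive density is a posterior mean, $p_j(y\mid\bm{Y}) = E_j[p_j(y\mid\theta_j)\mid\bm{Y}]$, hence for fixed $y$ it is a martingale in $n$; the martingale convergence theorem used for \eqref{AEconverge} gives $p_j(y\mid\bm{Y})$ and $p_j(y\mid\bm{Y}_{-i})$ converging to a common limit $p_j(y\mid\sigma_\infty)$ almost everywhere. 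The uniform equicontinuity of $p_j(\cdot\mid\bm{Y})$ in $y$ from Assumption (ii) lets me pass from this pointwise-in-$y$ statement to convergence at the data-dependent point $Y_i$, and since the limiting argument $\sum_j w_j p_j(Y_i\mid\sigma_\infty)$ is bounded below by $\sum_j w_j e^{-B_j(Y_i)} > 0$, continuity of the logarithm yields $u(Y_i, a(\bm{Y})) - u(Y_i, a(\bm{Y}_{-i})) \to 0$ almost everywhere over the mixture of the $J$ models. Dominating the squared difference by $4g(Y_i)$ through $(A-B)^2 \le 2A^2+2B^2$ and Assumption (i) supplies uniform integrability, so Theorem 25.12 of \citet{billingsley} gives convergence of each summand; the equicontinuity in $x$ from Assumption (ii) makes the bound uniform in $i$ exactly as in Step 3 of Theorem \ref{thm1}, so the average tends to zero. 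For the plug-in action I would replace the martingale step by the consistency of $\hat{\theta}_j(\bm{Y})$ together with continuity in $\theta_j$, precisely as in the extra step \eqref{extrastep} of Theorem \ref{thm2}.

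The main obstacle is the logarithm in the second term. Unlike squared or absolute error, $u$ is unbounded where the predictive density approaches zero, so the difference $u(Y_i,a(\bm{Y})) - u(Y_i,a(\bm{Y}_{-i}))$ cannot be controlled by the difference of the predictive densities alone. The bound $B_j$ and the integrability of $g$ in Assumption (i) are exactly what keep the argument of the log bounded away from zero and furnish the dominating function, while the uniform equicontinuity in $y$ in Assumption (ii) is what allows the fixed-$y$ martingale (or plug-in) convergence to be evaluated at the random point $Y_i$. Verifying these two transfers carefully is where the real work lies, the remaining estimates being routine applications of Jensen's inequality, Cauchy--Schwarz, and uniform integrability as in the earlier proofs.
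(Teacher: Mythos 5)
Your proposal follows essentially the same route as the paper's own (much terser) proof: the decomposition of \eqref{2.4.2.14}, Assumption (i) supplying the dominating function $g$ for uniform integrability of the first term, and Assumptions (i)--(ii) giving the martingale convergence $p_j(y_{n+1}\mid\bm{Y})\rightarrow p_j(y_{n+1}\mid\sigma_\infty)$ followed by dominated convergence for the second term, with the plug-in case handled via \eqref{extrastep}. In fact you supply more detail than the paper does (e.g., the explicit squeeze of the log's argument between $\sum_j w_j e^{\pm B_j}$ and the role of equicontinuity in $y$ for evaluating at the random point $Y_i$), and the argument is sound.
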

\begin{proof}
The structure of the proof  is similar to that of Theorem \ref{thm1} or Theorem \ref{thm2}.  For both Bayes
and plug-in predictors the main difference between the proof of this theorem and the proofs of Theorems \ref{thm1} and \ref{thm2}
is that in {\it Step 1} Assumption (i) is used to establish uniform integrability  (and convergence to zero) of the first term in \eqref{2.4.2.14}.
Showing that the second term in  \eqref{2.4.2.14}  goes to zero for the Bayes predictors requires Assumptions (i) and (ii) to get
\begin{eqnarray*}
	p_j(y_{n+1}\mid\bm{Y})  \rightarrow p_j(y_{n+1}\mid \sigma_\infty)
\end{eqnarray*}
almost everywhere as $n\rightarrow\infty$ to set up an application of the dominated convergence theorem.  For the plug-in
predictors, we need the extra step described in the proof of Theorem \ref{thm2}, see \eqref{extrastep}.
\end{proof}
\section{Derivation of stacking weights}
\label{derivation}

Suppose we have $J$ predictors $\hat{Y}_1,\cdots,\hat{Y}_J$ from distinct models.  Then we might seek weights 
$\hat{w}_j$, using the training data, so as to form a model average prediction at $x_{new}$ of the form
\begin{eqnarray}
	\hat{y}(x_{new}) = \sum_{j=1}^J \hat{w}_j \hat{y}_j (x_{new}).
	\label{modavg}
\end{eqnarray}  
From a Bayesian point of view, one should find the action that minimizes the posterior risk (or maximizes the posterior expected utility)
given the data $\bm{y}$.  Theorem \ref{thm1} shows that  the posterior risk is asymptotically equivalent to 
\begin{eqnarray*}
	 \frac{1}{n} \sum_{i=1}^n \ell(y_i,a(\bm{y}_{-i})) = \frac{1}{n} \sum_{i=1}^n  \left(  y_i - \sum_{j=1}^J w_j \hat{y}_{j,-i} (x_i)\right)^2,
\end{eqnarray*} 
when $\ell$ is squared error loss.  Ignoring the $(1/n)$ and minimizing over the $\hat{w}_j$'s gives the same expression as \eqref{stackcrit}.
That is, the stacking weights are asymptotically Bayes optimal -- the precise form of optimality given by the
constraints imposed on the $w_j$'s -- and can be used in \eqref{modavg} to give the
stacking predictor.  This formalizes the heuristic approximations used in \citet{clyde}.

There are two constraints on the $w_j$'s that are commonly used.  One is the `sum to one' constraint that requires $\sum_j w_j = 1$ (see \citet{clyde}) and
the other is the non-negativity constraint that requires all $w_j \geq 0$ (see \citet{breiman}). Removing the non-negativity constraint and relaxing the sum-to-one constraint to 
a sum-to-$m$ constraint give the following.

\begin{thm}\label{thm3.2n}
	The weights $w_1, \ldots , w_J$ achieving
	\begin{eqnarray*}
		\min_w \sum_{i=1}^n \left( y_i - \sum_{j=1}^J w_j \hat{y}_{j,-i} (x_i)\right)^2 \mbox{ subject to } \sum_{j=1}^J w_j= m
	\end{eqnarray*} 
are of the form
	\begin{eqnarray*}
		\hat{w} \propto U^{-1} 1_J,
	\end{eqnarray*} 	 
where 
	\begin{eqnarray}\label{3.5n}
	\begin{aligned}
		U &= (u_{lj})_{J\times J}, \\
		u_{lj} &= \sum_{i=1}^n \left( \frac{y_i}{m} - \hat{y}_{j,-i} \right)\hat{y}_{l,-i}  - \sum_{i=1}^n \left( y_i - \hat{y}_{j,-i} \right) y_i, \\
		 1_J &= (1,\cdots,1)'.
	\end{aligned}
	\end{eqnarray} 	 
\end{thm}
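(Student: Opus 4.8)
The plan is to treat this as a convex quadratic minimization over an affine set and attack it with a single Lagrange multiplier. First I would fix the shorthand $A_{lj} = \sum_{i=1}^n \hat{y}_{j,-i}\,\hat{y}_{l,-i}$, $b_l = \sum_{i=1}^n y_i\,\hat{y}_{l,-i}$, and $S = \sum_{i=1}^n y_i^2$, where $\hat{y}_{j,-i}$ abbreviates the leave-one-out prediction $\hat{y}_{j,-i}(x_i)$. The objective is then a convex quadratic in $w$ with Hessian $2A \succeq 0$, and the constraint $1_J' w = m$ is affine, so the Karush--Kuhn--Tucker conditions are both necessary and sufficient for a global minimizer. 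It therefore suffices to characterize the stationary points, and I will assume (as is generic) that $U$ is nonsingular so that $U^{-1}1_J$ is well defined.

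Next I would form the Lagrangian $\mathcal{L}(w,\lambda) = \sum_{i=1}^n ( y_i - \sum_{j=1}^J w_j \hat{y}_{j,-i} )^2 - \lambda( 1_J' w - m )$ and set $\partial \mathcal{L}/\partial w_l = 0$ for each $l$. Writing $\mu = -\lambda/2$, the stationarity conditions become $b_l - (Aw)_l = \mu$ for every $l$; equivalently, the residual inner products $\sum_{i=1}^n ( y_i - \sum_{j=1}^J w_j \hat{y}_{j,-i} )\hat{y}_{l,-i}$ all equal a common constant $\mu$ that does not depend on $l$. This single residual-orthogonality identity is the only property of the optimizer I will carry into the decisive step.

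The heart of the argument is to compute $(Uw)_l = \sum_{j=1}^J u_{lj} w_j$ directly from the given entries, which expand via \eqref{3.5n} to $u_{lj} = \tfrac{1}{m} b_l - A_{lj} - S + b_j$. Distributing $w$ over the four pieces and invoking the constraint $\sum_{j=1}^J w_j = m$ turns $\tfrac1m b_l \sum_j w_j$ into $b_l$ and $-S\sum_j w_j$ into $-mS$, so that $(Uw)_l = [\,b_l - (Aw)_l\,] - mS + b'w$, where $b'w = \sum_{j=1}^J b_j w_j$. Substituting the stationarity identity $b_l - (Aw)_l = \mu$ then yields $(Uw)_l = \mu - mS + b'w$, whose right-hand side is manifestly independent of $l$. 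Hence $Uw = \kappa\, 1_J$ with scalar $\kappa := \mu - mS + b'w$, and inverting $U$ gives $w = \kappa\, U^{-1} 1_J \propto U^{-1} 1_J$; the constant $\kappa$ is afterward pinned down by $1_J' w = m$, i.e. $\kappa = m/(1_J' U^{-1} 1_J)$.

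The main obstacle is conceptual rather than computational: each manipulation is routine, but one must recognize that the otherwise unmotivated entries $u_{lj}$ have been engineered precisely so that contracting with $w$ and using $\sum_j w_j = m$ regenerates the residual-orthogonality constant $\mu$ in every coordinate, collapsing $Uw$ onto the direction $1_J$. I would therefore foreground the computation of $(Uw)_l$ as the central display. The remaining loose ends are light: noting that the convex-quadratic-over-affine-set structure makes the stationary $w$ a genuine global minimizer, and recording the invertibility of $U$ needed to write $\hat{w} \propto U^{-1} 1_J$.
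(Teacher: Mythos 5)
Your proposal is correct and follows essentially the same route as the paper: a single Lagrange multiplier yields the residual-orthogonality identity $b_l-(Aw)_l=\mu$ for all $l$, and contracting the entries $u_{lj}$ with $w$ under the constraint $\sum_j w_j=m$ shows $(Uw)_l$ is independent of $l$, hence $Uw\propto 1_J$. Your version is somewhat cleaner in notation and adds the (welcome but minor) remarks on convexity/KKT sufficiency and the invertibility of $U$, which the paper leaves implicit.
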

\begin{proof}
This is a standard Lagrange multipliers problem.  Write the Lagrangian as
\begin{eqnarray*}
	L= - \sum_{i=1}^n \left(  y_i - \sum_{j=1}^J w_j \hat{y}_{j,-i} \right)^2 -\lambda_0 (\sum_{j=1}^J w_j - m).
\end{eqnarray*} 	

Then $\hat{w}$ is the solution of the following system,
\begin{eqnarray}
	\quad\quad\frac{\partial L}{\partial w_l} &= & 2  \sum_{i=1}^n \left(  y_i - \sum_{j=1}^J w_j \hat{y}_{j,-i} \right) \hat{y}_{l,-i} - \lambda_0 = 0 \mbox{ for } l = 1,\cdots,J, \label{3.2n}\\
	\frac{\partial L}{\partial \lambda_0} &=& \sum_{j=1}^J w_j - m = 0. \label{3.3n}
\end{eqnarray} 

From (\ref{3.2n}) and (\ref{3.3n}), we have
\begin{eqnarray*}
\begin{aligned}
	 &\sum_{i=1}^n \left(  y_i - \sum_{j=1}^J w_j \hat{y}_{j,-i} \right) \hat{y}_{l,-i}= \frac{\lambda_0}{2} \\
	 &\Rightarrow \sum_{i=1}^n y_i \hat{y}_{j,-i} - \sum_{j=1}^J w_j   \sum_{i=1}^n \hat{y}_{j,-i} \hat{y}_{l,-i} = \frac{\lambda_0}{2}\\
	 &\Rightarrow \frac{1}{m}\sum_{i=1}^n y_i \hat{y}_{j,-i} \sum_{j=1}^J w_j - \sum_{j=1}^J w_j   \sum_{i=1}^n \hat{y}_{j,-i} \hat{y}_{l,-i} 
	 	- \sum_{j=1}^J \sum_{i=1}^n (y_i - \hat{y}_{j,-i} ) y_i w_j \\
		&\quad\quad = \frac{\lambda_0}{2} - \sum_{j=1}^J \sum_{i=1}^n (y_i - \hat{y}_{j,-i} ) y_i w_j  \quad\mbox{ for } l = 1,\cdots,J
\end{aligned}
\end{eqnarray*} 
Since the right hand side does not depend on $l$, we have
\begin{eqnarray*}
\begin{aligned}
	 &\frac{1}{m}\sum_{i=1}^n y_i \hat{y}_{j,-i} \sum_{j=1}^J w_j - \sum_{j=1}^J w_j   \sum_{i=1}^n \hat{y}_{j,-i} \hat{y}_{l,-i} 
	 	- \sum_{j=1}^J \sum_{i=1}^n (y_i - \hat{y}_{j,-i} ) y_i w_j \propto 1.
\end{aligned}
\end{eqnarray*} 
Rearranging gives
\begin{eqnarray*}
\begin{aligned}
	 &w_1\left(  \frac{1}{m}\sum_{i=1}^n y_i \hat{y}_{l,-i} -  \sum_{i=1}^n \hat{y}_{1,-i} \hat{y}_{l,-i} - \sum_{i=1}^n (y_i - \hat{y}_{1,-i} ) y_i\right)\\
	 &+ w_2\left(  \frac{1}{m}\sum_{i=1}^n y_i \hat{y}_{l,-i} -  \sum_{i=1}^n \hat{y}_{2,-i} \hat{y}_{l,-i} - \sum_{i=1}^n (y_i - \hat{y}_{2,-i} ) y_i\right)\\
	 &\quad\vdots\\
	 &+ w_J\left(  \frac{1}{m}\sum_{i=1}^n y_i \hat{y}_{l,-i} -  \sum_{i=1}^n \hat{y}_{J,-i} \hat{y}_{l,-i} - \sum_{i=1}^n (y_i - \hat{y}_{J,-i} ) y_i\right) \propto 1,
\end{aligned}
\end{eqnarray*} 
 for $l = 1,\cdots,J$.

In matrix form, this system of equations is
\begin{eqnarray*}
\begin{aligned}
	U w  \propto 1_J,
\end{aligned}
\end{eqnarray*} 
where $U$ and $1_J$ are defined as in (\ref{3.5n}).
Therefore,  the solution is 
	\begin{eqnarray*}
		\hat{w} \propto U^{-1} 1_J,
	\end{eqnarray*} 	
which can be rescaled to satisfy the sum to $m$ constraint.

\end{proof}

\begin{cor} \label{cor1}
	If $m=1$, then 	the weights $w_1, \ldots , w_J$ achieving
	\begin{eqnarray*}
		\min_w \sum_{i=1}^n \left(  y_i - \sum_{j=1}^J w_j \hat{y}_{j,-i} (x_i)\right)^2 \mbox{ subject to } \sum_{j=1}^J w_j= 1
	\end{eqnarray*} 
are of the form
	\begin{eqnarray}\label{3.1'}
		\hat{w} \propto \left( \hat{e}'  \hat{e}\right)^{-1} 1_J,
	\end{eqnarray} 	 
where 
	\begin{eqnarray*}
		\hat{e} =  \left( y_i - \hat{y}_{j,-i}\right)_{n\times J}   \mbox{ and } 1_J = (1,\cdots,1)'.
	\end{eqnarray*} 	 
\end{cor}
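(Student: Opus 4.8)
The plan is to obtain Corollary \ref{cor1} as a direct specialization of Theorem \ref{thm3.2n} to the case $m=1$, the only real work being an algebraic simplification of the matrix $U$ defined in \eqref{3.5n}. First I would set $m=1$ in the formula for the entries $u_{lj}$, which removes the $1/m$ factor and leaves
\begin{eqnarray*}
u_{lj} = \sum_{i=1}^n \left( y_i - \hat{y}_{j,-i} \right)\hat{y}_{l,-i} - \sum_{i=1}^n \left( y_i - \hat{y}_{j,-i} \right) y_i.
\end{eqnarray*}

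The key step is then to factor the common term $(y_i - \hat{y}_{j,-i})$ out of each summand, giving
\begin{eqnarray*}
u_{lj} = \sum_{i=1}^n \left( y_i - \hat{y}_{j,-i} \right)\left( \hat{y}_{l,-i} - y_i \right) = - \sum_{i=1}^n \left( y_i - \hat{y}_{j,-i} \right)\left( y_i - \hat{y}_{l,-i} \right).
\end{eqnarray*}
I would then recognize the right-hand sum as the $(l,j)$ entry of the Gram matrix $\hat{e}'\hat{e}$, where $\hat{e} = (y_i - \hat{y}_{j,-i})_{n \times J}$ is the matrix of leave-one-out residuals, since $(\hat{e}'\hat{e})_{lj} = \sum_{i=1}^n (y_i - \hat{y}_{l,-i})(y_i - \hat{y}_{j,-i})$. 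Hence $U = -\hat{e}'\hat{e}$, which in particular is symmetric and (under the implicit assumption that it is nonsingular) invertible.

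Finally, substituting $U = -\hat{e}'\hat{e}$ into the conclusion $\hat{w} \propto U^{-1} 1_J$ of Theorem \ref{thm3.2n} gives $\hat{w} \propto -(\hat{e}'\hat{e})^{-1} 1_J$, and since the proportionality is only up to a nonzero scalar (to be fixed afterward by the sum-to-one constraint), the factor $-1$ is absorbed and I recover exactly \eqref{3.1'}. There is no genuine analytic obstacle here, as the statement is purely a corollary of the preceding theorem; the one point requiring care is recognizing that the two terms defining $u_{lj}$ recombine into a single product so that $U$ collapses to the negative of a symmetric Gram matrix, together with the observation that the sign is irrelevant under proportionality.
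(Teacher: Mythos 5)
Your derivation is correct and is exactly the route the paper intends: the corollary is stated as an immediate specialization of Theorem \ref{thm3.2n} to $m=1$, and your factoring of $u_{lj}$ into $-\sum_{i=1}^n (y_i-\hat{y}_{j,-i})(y_i-\hat{y}_{l,-i})$ correctly identifies $U=-\hat{e}'\hat{e}$, with the sign absorbed by the proportionality and the final scaling fixed by the sum-to-one constraint. The paper itself omits this algebra, so your write-up simply supplies the details it leaves implicit.
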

\begin{rem}
This corollary is the result \citet{clyde} used.
\end{rem}

For contrast, let us solve \eqref{stackcrit} but without any sum  constraint (and without the
non-negativity constraint).  Now, the Lagrangian is 
\begin{eqnarray*}
	L= - \sum_{i=1}^n \left(  y_i - \sum_{j=1}^J w_j \hat{y}_{j,-i} \right)^2
\end{eqnarray*} 	
and $\hat{w}$ is the solution of the system of equations
\begin{eqnarray*}
	\frac{\partial L}{\partial w_l} &= & 2  \sum_{i=1}^n \left( y_i - \sum_{j=1}^J w_j \hat{y}_{j,-i} \right)  \hat{y}_{l,-i} = 0 \mbox{ for } l = 1,\cdots,J.
\end{eqnarray*} 
Therefore,
\begin{eqnarray}\label{3.5'}
\begin{aligned}
	 & \sum_{i=1}^n \hat{y}_{l,-i} \sum_{j=1}^J w_j \hat{y}_{j,-i} = \sum_{i=1}^n y_i \hat{y}_{l,-i} \\
	 & \Leftrightarrow  \sum_{j=1}^J \left(\sum_{i=1}^n \hat{y}_{l,-i}  \hat{y}_{j,-i}\right) w_j = \sum_{i=1}^n y_i \hat{y}_{l,-i}, \mbox{ for } l = 1,\cdots,J,
\end{aligned}
\end{eqnarray} 
or, in matrix form,
\begin{eqnarray*}
	T w = c,
\end{eqnarray*} 
where 
\begin{eqnarray}\label{3.5}
\begin{aligned}
	&T =   \left(\sum_{i=1}^n \hat{y}_{l,-i}  \hat{y}_{j,-i}\right)_{J \times J},\\
	& c = \left(\sum_{i=1}^n y_i \hat{y}_{1,-i},\cdots,\sum_{i=1}^n y_i \hat{y}_{J,-i}\right)'.
\end{aligned}
\end{eqnarray} 	 
Hence the solution to \eqref{stackcrit} without the sum to one constraint and without the non-negativity constraint is 
\begin{eqnarray*}
	\hat{w} = T ^{-1} c.
\end{eqnarray*} 

We summarize this in the following theorem.
\begin{thm}\label{thm3.2}
	The weights $w_1, \ldots , w_J$ achieving
	\begin{eqnarray*}
		\min_w \sum_{i=1}^n \left(  y_i - \sum_{j=1}^J w_j \hat{y}_{j,-i} (x_i)\right)^2
	\end{eqnarray*} 
are of the form
	\begin{eqnarray}\label{3.7}
		\hat{w} = T ^{-1} c,
	\end{eqnarray} 	 
where $T$ and $c$ are given in (\ref{3.5}).
In addition, if the $J$ predictors are orthonormal,
\begin{eqnarray*}
		\sum_{i=1}^n \hat{y}_{l,-i}  \hat{y}_{j,-i} = \delta_{l\ne j}  \quad(1 \mbox{ if } l\ne j \mbox{ and } 0 \mbox{ otherwise}),
\end{eqnarray*} 	
then $T = I$ and the solution becomes
\begin{eqnarray}\label{3.8}
		\hat{w}_j = \sum_{i=1}^n y_i \hat{y}_{j,-i}, \mbox{ for } j = 1,\cdots,J.
\end{eqnarray} 
\end{thm}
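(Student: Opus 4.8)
The plan is to treat this as an unconstrained least-squares problem in the weight vector $w = (w_1, \ldots, w_J)'$, running exactly parallel to the constrained derivation preceding Theorem \ref{thm3.2n} but with the Lagrange term removed. First I would write the objective $S(w) = \sum_{i=1}^n \left( y_i - \sum_{j=1}^J w_j \hat{y}_{j,-i} \right)^2$ and note that it is a convex quadratic in $w$: its Hessian is $2T$, where $T = \left( \sum_{i=1}^n \hat{y}_{l,-i}\hat{y}_{j,-i} \right)_{J\times J}$ is a Gram matrix and hence positive semidefinite. Therefore any stationary point of $S$ is automatically a global minimizer, and the whole problem reduces to solving the first-order conditions.

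Next I would differentiate term by term. Computing $\partial S / \partial w_l = -2 \sum_{i=1}^n \left( y_i - \sum_{j=1}^J w_j \hat{y}_{j,-i} \right) \hat{y}_{l,-i}$ and setting it to zero for each $l = 1, \ldots, J$ yields the normal equations $\sum_{j=1}^J \left( \sum_{i=1}^n \hat{y}_{l,-i}\hat{y}_{j,-i} \right) w_j = \sum_{i=1}^n y_i \hat{y}_{l,-i}$, which is precisely the system \eqref{3.5'}. Collecting these $J$ scalar equations gives the matrix equation $Tw = c$ with $T$ and $c$ as defined in \eqref{3.5}. Provided $T$ is nonsingular, I would invert to obtain $\hat{w} = T^{-1}c$, establishing \eqref{3.7}. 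For the orthonormal special case I would simply substitute the stated orthonormality condition, under which every off-diagonal entry of $T$ vanishes and every diagonal entry equals one, so that $T = I$; the normal equations then collapse entrywise to $\hat{w}_j = \sum_{i=1}^n y_i \hat{y}_{j,-i}$, which is \eqref{3.8}.

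I do not anticipate a substantive obstacle here, since the argument is a routine convex-optimization calculation with a closed-form stationary point. The only point requiring care is the invertibility of $T$: this holds precisely when the $J$ leave-one-out prediction vectors $(\hat{y}_{j,-1}, \ldots, \hat{y}_{j,-n})'$ are linearly independent in $\mathbb{R}^n$, which is the implicit nondegeneracy hypothesis under which $T^{-1}$ exists. I would state this condition explicitly so that the solution $\hat{w} = T^{-1}c$ is well-defined, and note that in the orthonormal case it is automatic.
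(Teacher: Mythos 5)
Your proposal is correct and follows essentially the same route as the paper: differentiate the quadratic objective, obtain the normal equations $Tw=c$ as in \eqref{3.5'}, invert $T$, and specialize to $T=I$ in the orthonormal case. Your additional remarks --- that convexity (the Hessian $2T$ being a positive semidefinite Gram matrix) makes the stationary point a global minimizer, and that invertibility of $T$ requires linear independence of the leave-one-out prediction vectors --- are sensible points of rigor that the paper leaves implicit.
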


Note that the minimum in  Corollary \ref{cor1} with the sum to one constraint  is taken over a smaller set than that of Theorem \ref{thm3.2} without any
sum  constraint. 
So, when the stacking weights from the two cases both exist, we expect the latter to give better predictive performance
because the minimum in Theorem \ref{thm3.2} can only be smaller than the minimum in Corollary \ref{cor1}.  
Hence we do not favor imposing the sum to one constraint.  Indeed, we find in our computed examples that when a sum to one constraint 
gives better prediction, it is merely a happenstance from the more general optimization.  This is straightforward because
if we find the optimal weights from Theorem \ref{thm3.2} then we can use them to find $m = \sum_{j=1}^J w_j$ for use in Theorem \ref{thm3.2n}.

Using arguments similar to those used in the proof of Theorem \ref{thm3.2}, the following result extends Theorem \ref{thm3.2}
to a Hilbert space $\cal{H}$ equipped with an empirical inner product
\begin{eqnarray*}\label{3.9}
	\langle g, h \rangle_n = \frac{1}{n} \sum_{i=1}^n g(x_i)h(x_i) \quad\forall g, h \in \cal{H}.
\end{eqnarray*} 
\begin{thm}\label{thm3.3}
The weights $w_1, \ldots , w_J$ achieving
	\begin{eqnarray*}\label{3.10}
		\min_w \sum_{i=1}^n  \left(  y(x_i) - \sum_{j=1}^J w_j \hat{f}_{j, -i}(x_i) \right)^2,
	\end{eqnarray*} 
where $y$ and $\hat{f}_{j, -i}$, $j=1,\cdots,J$, belong to $\cal{H}$, are of the form
	\begin{eqnarray*}\label{3.11}
		\hat{w} = T ^{-1} c,
	\end{eqnarray*} 	 
where $T$ and $c$ are of the same form as \eqref{3.5}.

\end{thm}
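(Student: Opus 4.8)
The plan is to follow the proof of Theorem \ref{thm3.2} essentially verbatim, since the passage to a Hilbert space $\mathcal{H}$ is cosmetic. The only changes are that the outcome $y_i$ is now the value $y(x_i)$ of an element $y \in \mathcal{H}$, and the leave-one-out predictor value is now $\hat{f}_{j,-i}(x_i)$ with $\hat{f}_{j,-i} \in \mathcal{H}$. The essential point is that, although $y$ and the $\hat{f}_{j,-i}$ may live in an infinite-dimensional space, the minimization is carried out over the finite-dimensional vector $w \in \mathbb{R}^J$, and the objective depends on these functions only through their evaluations at the $n$ design points $x_1, \ldots, x_n$. The empirical inner product $\langle \cdot, \cdot \rangle_n$ is precisely the device that packages those evaluations, so the problem reduces to the ordinary least-squares calculation already carried out in Theorem \ref{thm3.2}.

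Concretely, I would write the (unconstrained) Lagrangian $L = -\sum_{i=1}^n \left( y(x_i) - \sum_{j=1}^J w_j \hat{f}_{j,-i}(x_i) \right)^2$ and set $\partial L / \partial w_l = 0$ for $l = 1, \ldots, J$. This yields the stationarity conditions $\sum_{i=1}^n \left( y(x_i) - \sum_{j=1}^J w_j \hat{f}_{j,-i}(x_i) \right) \hat{f}_{l,-i}(x_i) = 0$, which, after collecting the coefficient of each $w_j$ exactly as in \eqref{3.5'}, become the linear system $Tw = c$. Here $T$ has entries $\sum_{i=1}^n \hat{f}_{l,-i}(x_i) \hat{f}_{j,-i}(x_i)$ and $c$ has entries $\sum_{i=1}^n y(x_i) \hat{f}_{l,-i}(x_i)$, i.e., the Gram-type cross-product matrix and vector induced by $\langle \cdot, \cdot \rangle_n$ in the same form as \eqref{3.5}. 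Solving gives $\hat{w} = T^{-1} c$.

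Since the objective is a non-negative sum of squares, it is a convex quadratic in $w$ with positive semidefinite Hessian $2T$, so the stationarity conditions are not only necessary but also sufficient for a global minimum. The step I would flag as the main obstacle is the invertibility of $T$ needed to write $\hat{w} = T^{-1}c$: writing $\Phi$ for the $n \times J$ matrix with $(i,j)$ entry $\hat{f}_{j,-i}(x_i)$, one has $T = \Phi' \Phi$, so $T$ is non-singular precisely when the $J$ leave-one-out prediction vectors (the columns of $\Phi$) are linearly independent in $\mathbb{R}^n$. This is exactly the rank condition present in Theorem \ref{thm3.2}; the Hilbert space framing neither weakens nor strengthens it, because linear independence of the $\hat{f}_{j,-i}$ relative to the empirical evaluations is equivalent to full column rank of $\Phi$. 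Under this condition $\hat{w} = T^{-1}c$ is the unique minimizer, completing the argument.
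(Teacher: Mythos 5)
Your proposal is correct and follows essentially the same route as the paper, which proves Theorem \ref{thm3.3} simply by noting that the Lagrangian/normal-equations derivation preceding Theorem \ref{thm3.2} carries over verbatim once $y_i$ and $\hat{y}_{j,-i}$ are replaced by the evaluations $y(x_i)$ and $\hat{f}_{j,-i}(x_i)$. Your explicit remarks on convexity of the objective and on the full-column-rank condition needed for $T=\Phi'\Phi$ to be invertible are sensible additions that the paper leaves implicit, but they do not change the argument.
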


As $n\rightarrow\infty$, there are conditions that ensure the empirical inner product $\langle g, h \rangle_n$
converges uniformly to the inner product $\langle g, h \rangle = \int g(x)h(x) dx$ of the $\cal{H}$ space, see \citet{geer}.
Therefore, as $n$ increases we can approximate the empirical inner product by the $\cal{H}$ inner product
and the results in Theorem \ref{thm3.3} will remain true. 
\section{What models should we put in the stack?}
\label{basisselection}

Here, we show that the intuition of \citet{breiman} that the models to be stacked should be as different as
possible is only partially correct.  What matters about the models to be stacked is that they be independent.
The extra `difference' amongst models from imposing orthogonality is not actually helpful in terms of reducing
the error criterion \eqref{stackcrit}.  We show this for models constructed in general Hilbert spaces
of functions and then provide one possible answer for how to construct the models in a Hilbert space to be stacked.
Note that this restriction limits us to ${\cal{M}}$-complete problems. In general, in ${\cal{M}}$-open problems
one cannot assume the regression function is in a Hilbert space. However, in our ${\cal{M}}$-open example in Subsec. \ref{seceg}
we did not find orthonormality of a basis gave better predictions.

\subsection{The error depends only on the span of the model list.}
\label{modlistspan}
In the last section, we saw that releasing the
sum to one constraint can only reduce the error criterion and from our example in the
introduction we saw that this constraint can often be genuinely harmful.  This argument is particularly strong outside
${\cal{M}}$-closed settings where model mis-specification is  always present.  

Our first result shows that given a set of models to stack, the error  depends only on the span of the models;
requiring that the models to be stacked be orthogonal as well as independent does not reduce the error.
Our  result is the following.

\begin{thm}\label{thm4.2}
Let  ${\cal{H}}$ be a Hilbert space with inner product denoted
$\langle \cdot  , \cdot \rangle$.  Let  ${\cal{M} }= \{ f_1,\cdots, f_J \}$ and ${\cal{M}}^\prime = \{ f^\prime_1,\cdots, f^\prime_{J^\prime} \}$ be sets of elements
from ${\cal{H}}$ with minima $Q_{\min}^{\cal{M}}$ and $Q_{\min}^{{\cal{M}}^\prime}$ for \eqref{stackcrit},
respectively. 
Denote the span of a set of elements in ${\cal{H}}$ by $\langle \cdot \rangle$.   Then,
if $\langle {\cal{M}} \rangle = \langle  {\cal{M}}^\prime \rangle$,
\begin{eqnarray*}\label{4.7}
		Q_{\min}^{\cal{M}} = Q_{\min}^{\cal{M}'},
\end{eqnarray*} 
i.e., the stacking error only depends on the span of the predictors.
\end{thm}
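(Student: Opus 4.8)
The plan is to recognize the stacking criterion \eqref{stackcrit} as a least-squares approximation problem whose minimum value is nothing but the squared distance from the response to the linear span of the predictors being stacked. First I would observe that as $w = (w_1, \ldots , w_J)$ ranges over $\mathbb{R}^J$, the fitted element $\sum_{j=1}^J w_j f_j$ ranges over exactly the span $\langle {\cal{M}} \rangle$ of ${\cal{M}} = \{f_1, \ldots , f_J\}$, so that
\begin{eqnarray*}
	Q_{\min}^{\cal{M}} = \min_{w \in \mathbb{R}^J} \left\| y - \sum_{j=1}^J w_j f_j \right\|^2 = \inf_{g \in \langle {\cal{M}} \rangle} \| y - g \|^2 ,
\end{eqnarray*}
where $\| \cdot \|$ is the norm induced by $\langle \cdot , \cdot \rangle$. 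Written this way, the right hand side depends on ${\cal{M}}$ only through the subspace $\langle {\cal{M}} \rangle$ and not through the particular generators $f_1, \ldots , f_J$.

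Next I would invoke the orthogonal projection theorem. Since $\langle {\cal{M}} \rangle$ is finite-dimensional, hence closed, the infimum is attained uniquely at $g^\ast = P_{\langle {\cal{M}} \rangle} y$, the projection of $y$ onto $\langle {\cal{M}} \rangle$, and equals $\| y - P_{\langle {\cal{M}} \rangle} y \|^2$. The projection operator $P_{\langle {\cal{M}} \rangle}$ is determined entirely by the subspace, so $Q_{\min}^{\cal{M}}$ is a function of $\langle {\cal{M}} \rangle$ alone. Under the hypothesis $\langle {\cal{M}} \rangle = \langle {\cal{M}}^\prime \rangle$ the two subspaces coincide, hence their projections coincide, and therefore $Q_{\min}^{\cal{M}} = \| y - P_{\langle {\cal{M}} \rangle} y \|^2 = \| y - P_{\langle {\cal{M}}^\prime \rangle} y \|^2 = Q_{\min}^{{\cal{M}}^\prime}$, which is the claim.

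As an alternative that stays closer to Theorem \ref{thm3.2}, I would substitute the optimal weights $\hat{w} = T^{-1} c$ from \eqref{3.7} back into the objective to get $Q_{\min}^{\cal{M}} = \|y\|^2 - c' T^{-1} c$, and then check that the scalar $c' T^{-1} c$ is invariant under any change of generating set that preserves the span, i.e. under right multiplication of the design by an invertible matrix $A$, since that sends $T \mapsto A' T A$ and $c \mapsto A' c$. I would present the projection argument as the main line because it is coordinate-free and handles $J \neq J^\prime$ automatically.

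The main obstacle is bookkeeping rather than algebra, at two points. The first is that the genuine criterion uses the leave-one-out fits $\hat{f}_{j,-i}$ rather than the population elements $f_j$, so I must be explicit that the relevant span is the span of the fitted predictors and argue, or assume in the idealized Hilbert-space formulation of Theorem \ref{thm3.3}, that a span-preserving change of the model list carries over to the fitted predictors. The second is attainment of the infimum, which needs $\langle {\cal{M}} \rangle$ closed; this is automatic in finite dimension and lets me take $T$ (the Gram matrix) invertible, but when the generators are linearly dependent I would first pass to a basis of the span so that the matrix route is well posed.
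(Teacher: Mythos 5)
Your proposal is correct and is essentially the paper's own argument: the paper also reduces $Q_{\min}^{\cal{M}}$ to the squared norm of the component of $y$ orthogonal to $\langle {\cal{M}} \rangle$, just carried out in coordinates by decomposing $y = y_1 + y_2$ along an orthonormal basis and verifying via the normal equations that $\hat{w} = T^{-1}c$ equals the coefficient vector of the projection, so that $Q_{\min}^{\cal{M}} = \|y_2\|^2$. Your direct appeal to the orthogonal projection theorem is a cleaner packaging of the same idea (and handles $J \neq J'$ without the paper's ``WLOG pass to bases'' step), and your flagged caveat about the criterion really involving the leave-one-out fits $\hat{f}_{j,-i}$ rather than the $f_j$ is a legitimate gap that the paper's proof silently shares.
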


\begin{proof}  This involves routine manipulations with Hilbert spaces, see the
Appendix for details.
\end{proof}

Theorem \ref{thm4.2} means that given a fixed  subspace $S \subset {\cal{H}}$, any basis for $S$ is as good as any other
for forming a stacking predictor. 
So, we are free to choose whichever basis is most convenient.


Note that Theorem \ref{thm4.2} only applies  in the absence of constraints on the coefficients $w_j$.  Indeed,
the conclusion may be false if constraints are imposed.  Let $J=J'$, ${\cal{M}} = \{ \hat{y}_j = (\hat{y}_{j,-1},\cdots, \hat{y}_{j,-n})', j = 1,\cdots,J\}$ be an orthogonal basis, and  ${\cal{M}'}  = \{ \hat{y}_j' = (\hat{y}_{j,-1}' ,\cdots, \hat{y}_{j,-n}')', j = 1,\cdots,J\}$ be any basis 
of $\langle {\cal{M}}^\prime \rangle = \langle  {\cal{M}} \rangle$. Then, under the sum to one constraint on $\cal{M}$ we have
\begin{eqnarray}\label{4.9nn}
\begin{aligned}
	Q_{\min}^{\cal{M}} &= \left\| y - \sum_{j=1}^J   \hat{w}_j  \hat{y}_j \right\|^2\\
	&= \left\| \sum_{j=1}^J \langle y, \hat{y}_j \rangle \hat{y}_j +  \sum_{j=J+1}^n \langle y, e_j \rangle e_j - \sum_{j=1}^J   \hat{w}_j  \hat{y}_j \right\|^2\\
	&= \left\| \sum_{j=1}^J \left( \langle y, \hat{y}_j \rangle -  \hat{w}_j \right) \hat{y}_j    \right\|^2+ \| y_2 \|^2,
\end{aligned}
\end{eqnarray} 	 
where $\hat{w}$ is now the solution in Corollary \ref{cor1},  $\{ e_j, j=J+1,\cdots, n\}$ are complement vectors of 
$\{ \hat{y}_j, j = 1,\cdots,J\}$ to form an orthonormal basis of $\mathbb{R}^n$, and $y = y_1+y_2 = \sum_{j=1}^J \langle y, \hat{y}_j \rangle \hat{y}_j +  \sum_{j=J+1}^n \langle y, e_j \rangle e_j $. Similarly, for $\cal{M}'$ we have
\begin{eqnarray}\label{4.10nn}
\begin{aligned}
	Q_{\min}^{\cal{M}'} 	= \left\| \sum_{j=1}^J \left( \alpha_j -  \hat{w}_j' \right) \hat{y}_j'    \right\|^2+ \| y_2 \|^2,
\end{aligned}
\end{eqnarray} 	 
where $\hat{w}'$ is the solution in Corollary \ref{cor1} and $y = y_1+y_2 = \sum_{j=1}^J \alpha_j  \hat{y}_j' +  \sum_{j=J+1}^n \langle y, e_j \rangle e_j $. Obviously, from (\ref{4.9nn}) and (\ref{4.10nn}), 
it is possible for $Q_{\min}^{\cal{M}} < Q_{\min}^{\cal{M}'}$ or $Q_{\min}^{\cal{M}}  > Q_{\min}^{\cal{M}'}$.
This can be seen from the following example.  
Let $J=J^\prime=1$ and $\hat{y}'  = k \hat{y} $, then $\hat{w}=\hat{w}'=1$ and  $\alpha = \langle y, \hat{y} \rangle / k$. Hence    $Q_{\min}^{\cal{M}} = ( \langle y, \hat{y} \rangle-1)^2 + \| y_2 \|^2$ and $Q_{\min}^{\cal{M}'} = ( \langle y, \hat{y} \rangle-k)^2 + \| y_2 \|^2$.
So, by careful choice of $k$, $Q_{\min}^{\cal{M}}$ can be larger than $Q_{\min}^{\cal{M}'}$ or the reverse.


To reinforce Theorem \ref{thm4.2}, we observe that reducing the dimension of the span of the predictors
can only increase the error criterion.

\begin{thm}
\label{thm4.4n} 
Let ${\cal{M} } = \{ f_1,\cdots, f_J \} $ be a basis
and ${\cal{N}} = \{ f_1,\cdots, f_{J-1} \}$.  Let $Q_{\min}^{\cal{M}}$ and $Q_{\min}^{\cal{N}}$ be the 
minima of \eqref{stackcrit} corresponding to $\cal{M}$ and $\cal{N}$, respectively. Then,
\begin{eqnarray}\label{4.9n}
		Q_{\min}^{\cal{M}} \le Q_{\min}^{\cal{N}}.
\end{eqnarray} 
\end{thm}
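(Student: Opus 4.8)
The plan is to exploit the nested structure of the two optimization problems and reduce \eqref{4.9n} to the elementary fact that minimizing over a larger feasible set can only lower the minimum. First I would note that the leave-one-out predictor attached to each model, namely the vector $\hat{y}_j = (\hat{y}_{j,-1}(x_1),\ldots,\hat{y}_{j,-n}(x_n))' \in \mathbb{R}^n$, is produced by model $M_j$ in isolation and so does not depend on which other models are present in the stack. Hence the predictors entering the criterion \eqref{stackcrit} for ${\cal{N}}$ are exactly $\hat{y}_1,\ldots,\hat{y}_{J-1}$, identical to the first $J-1$ predictors entering the criterion for ${\cal{M}}$.

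Next I would recast \eqref{stackcrit} geometrically. Writing $\|\cdot\|$ for the Euclidean norm on $\mathbb{R}^n$ and $y = (y_1,\ldots,y_n)'$, the criterion value for a given model list is the squared distance from $y$ to the linear span of the associated predictor vectors. Thus $Q_{\min}^{\cal{M}}$ is the squared distance from $y$ to $\operatorname{span}(\hat{y}_1,\ldots,\hat{y}_J)$ and $Q_{\min}^{\cal{N}}$ is the squared distance from $y$ to $\operatorname{span}(\hat{y}_1,\ldots,\hat{y}_{J-1})$. This is the same span-based viewpoint that underlies Theorem \ref{thm4.2}, and it makes the comparison transparent.

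The key step is then simply that $\operatorname{span}(\hat{y}_1,\ldots,\hat{y}_{J-1}) \subseteq \operatorname{span}(\hat{y}_1,\ldots,\hat{y}_J)$, so the orthogonal projection of $y$ onto the larger subspace lies at least as close to $y$ as the projection onto the smaller one. Equivalently, any weight vector $(w_1,\ldots,w_{J-1})$ feasible for the ${\cal{N}}$ problem extends to the feasible vector $(w_1,\ldots,w_{J-1},0)$ for the ${\cal{M}}$ problem and yields the same criterion value, so the infimum defining $Q_{\min}^{\cal{M}}$ is taken over a superset of the configurations available to $Q_{\min}^{\cal{N}}$. This gives \eqref{4.9n} immediately.

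There is no substantive obstacle here; the statement is pure monotonicity of minimization over nested sets. The only point genuinely worth checking is that the embedding $(w_1,\ldots,w_{J-1}) \mapsto (w_1,\ldots,w_{J-1},0)$ maps the ${\cal{N}}$ feasible set into the ${\cal{M}}$ feasible set, which holds precisely because \eqref{stackcrit} carries no constraint on the weights. Were a sum-to-one constraint imposed, appending a zero coefficient need not respect it, and the inequality could fail, exactly as in the counterexample following Theorem \ref{thm4.2}.
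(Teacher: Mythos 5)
Your proof is correct, but it takes a genuinely different route from the paper's. You reduce \eqref{4.9n} to monotonicity of minimization over nested feasible sets: the leave-one-out predictors for ${\cal{N}}$ are the first $J-1$ predictors for ${\cal{M}}$, the embedding $(w_1,\ldots,w_{J-1})\mapsto(w_1,\ldots,w_{J-1},0)$ is feasible because \eqref{stackcrit} is unconstrained, and the inequality follows at once. The paper instead assumes (without loss of generality, by Theorem \ref{thm4.2}) that ${\cal{M}}$ is orthonormal and computes both minima explicitly via the orthogonal decomposition $y=\sum_{j}\langle y,f_j\rangle f_j+\sum_{j\ge J}\langle y,e_j\rangle e_j$, obtaining $Q_{\min}^{\cal{M}}=\sum_{j>J}\langle y,e_j\rangle^2$ and $Q_{\min}^{\cal{N}}=\bigl\|\sum_{j=1}^{J-1}(\langle y,f_j\rangle-\hat w_j)f_j\bigr\|^2+\sum_{j\ge J}\langle y,e_j\rangle^2$, from which the inequality is read off term by term. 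Your argument is shorter, needs no orthonormalization and no appeal to the explicit formula for $\hat w$, and makes the logical structure transparent; the paper's computation buys more, namely the exact size of the gap (an extra $\langle y,e_J\rangle^2$ in the residual), which quantifies how much is lost by dropping a basis element rather than merely establishing the sign of the difference. Your closing observation that the embedding argument breaks under a sum-to-one constraint is apt and consistent with the paper's own caveat following Theorem \ref{thm4.2}.
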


\begin{proof}
This involves relatively routine manipulations with Hilbert spaces, see the
Appendix for details.
\end{proof}

Taken together,  Theorem \ref{thm4.2} and Theorem \ref{thm4.4n} tell us that the predictors being stacked should be different from 
one another in the sense of being independent (but not necessarily orthogonal) and that the stacking error \eqref{stackcrit}
is a non-increasing function of the span of the predictors.  Thus, when choosing predictors to stack,
there is a tradeoff between the number of predictors and their proximity to a true model assuming one exists.
That is, using more predictors will generally be helpful, but using fewer, better predictors can easily outperform
many, weaker predictors.

\subsection{Optimal choice of predictors to stack.}
\label{optchoice}
Having seen that both the number of basis elements and the proximity of a linear combination of them to a true function (if a true function exists) 
can be important we want to choose the basis elements effectively.
The results in Subsec. \ref{modlistspan} mean that, without loss of generality, we can limit our search to orthogonal bases.
Hence, in this subsection, we  propose a data-driven method to choose an optimal number of basis elements even if the set of basis elements is not unique.

Assume we have an orthonormal basis for a space  $\langle \{  e_1,\cdots, e_J   \}\rangle$ then for each $J'\le J$ we can form
\begin{eqnarray}\label{4.8}
\quad\quad\quad		\hat{y}_{J',\sigma_k,\lambda,i}(x_{\sigma_k(i)}) = \sum_{j=1}^{J'} \langle  e_j, \hat{f}_\lambda (\cdot\mid x_{\sigma_k(1)},\cdots,x_{\sigma_k(i-1)}) \rangle
		e_j(x_{\sigma_k(i)}),
\end{eqnarray} 
where $\hat{f}_\lambda$  is an estimate of the true predictor, for instance from the Nadaraya-Watson nonparametric regression estimator, see \citet{nadaraya} and 
\citet{watson}, $\lambda$ is a tuning parameter,  and 
$\sigma_k$  for $k=1, \ldots , K$ is a collection of independent permutations of $\{1,\cdots,n\}$.
Then, in principle, we can find 
\begin{eqnarray}\label{4.9}
\begin{aligned}
	&\{J_{opt}, basis_{opt}\} \\
	&= \arg\min_{J', basis} \sum_{k=1}^K \sum_{i=1}^n \left(   \hat{y}_{J',\sigma_k, \hat{\lambda},i}(x_{\sigma_k(i)}) - y(x_{\sigma_k(i)})    \right)^2 ,
\end{aligned}	
\end{eqnarray} 
where $\hat{\lambda}$ is an estimator for the tuning parameter and \textit{basis} is a variable varying over the possible orthonormal bases for
subspaces of $\langle \{  e_1,\cdots, e_J   \}\rangle$.

The idea is that \eqref{4.9} is a sort of variance-bias expression that can be minimized to find the right number of basis elements. 
Minimizing in \eqref{4.9}  means we are preventing the number of basis elements from being too small (high bias)
or too large (high variance).  This is embedded in
\eqref{4.9} because it uses $K$ independent orderings of the data and sequential predictive error (since we 
want $ \hat{y}_{J',\sigma_k, \hat{\lambda},i}(x_{\sigma_k(i)})$ at stage $\sigma_k(i)$ to use only the data $x_{\sigma_k(1)},\cdots,x_{\sigma_k(i-1)}$). 
Averaging over the permutations of the data points as they appear in the sequential predictive error means that it is reasonable
to regard the empirical optimum as close to an actual optimum if it exists.  In fact, in $\cal{M}$-open settings, it does not make
sense to take limits of \eqref{4.9}, so it is hard to prove theory.  
What is feasible is to seek a
$\{J_{opt}, basis_{opt}\}$ that makes \eqref{4.9} small.  This can be done numerically by a stochastic search provided we have a way
to propose basis vectors.

A simplification of \eqref{4.9} is to replace the sequential prediction with some form of cross-validation; this saves
on computing time.  

Our method for data-driven random generation of basis elements is simple.  Draw $J$ bootstrap samples from the data of size $n$.
For each bootstrap sample, define a basis element from the Nadaraya-Watson estimator.  This gives $J$ candidates
for basis vectors.  Next, apply Gram-Schmidt orthonormalization to form an orthonormal basis.
In some cases, two or more of these basis elements may be so close as to be de facto the same.  When this occurs,
we reject the results and repeat the procedure from the beginning until we get $J$ orthonormal basis elements. 
Note that any technique for nonparametric regression can be used in place of Nadaraya-Watson.  In Sec.
\ref{computedexamples} we also use Gaussian process priors, see \citet{rasmussen}, 
to generate function estimates that can be used as basis elements.

In addition, we may save on computing time by avoiding
having to permute over basis elements if they have a natural ordering e.g., Fourier bases are ordered by frequency, 
Legendre polynomials are ordered by degree etc.  
Although there are many ways to order randomly generated basis elements, here we order them by using the size
of regions in real spaces of dimension $\dim(x)$ as follows. 
Given the randomly generated orthonormal basis $\{ \hat{e}_1, \ldots , \hat{e}_J \}$ we first form the nonparametric
regression function $\hat{f}_{\hat{\lambda}} = \hat{f}_{\hat{\lambda}}(\cdot \mid (x_1, y_1), \ldots , (x_n, y_n) )$ using all the data.  Then,
we define the ordered basis $[e_1^*,\cdots,e_J^*]$  by the criterion
\begin{eqnarray*}
	| SA(\hat{f}_{\hat{\lambda}}) - SA (e_1^*)| \le \cdots \le | SA (\hat{f}_{\hat{\lambda}}) - SA (e_J^*)|,
\end{eqnarray*} 
where $SA(f)$ is the surface area of the function $f$ on domain, assuming that the domain is the same for all
nonparametric regression estimators and compact.  In the special case that $x$ is unidimensional, $SA(\hat{f}_{\hat{\lambda}})$
is just the arc length over its compact domain in $\mathbb{R}$ and when $x$ is two-dimensional, $SA(\hat{f}_{\hat{\lambda}})$ is the 
area of the surface defined by $\hat{f}_{\hat{\lambda}}$ over its compact  domain in $\mathbb{R}^2$.  
Essentially, we are ordering the random basis elements by how close they are to a full data function estimator
in terms of volume in $\mathbb{R}^{\dim{x}}$.
\section{Computed examples} 
\label{computedexamples}

In this section we apply our technique described in previous sections to one $\cal{M}$-complete data set and one $\cal{M}$-open data set.  The first is a `canned'
data set that is recognized to be difficult.  The second is a new data set on soil moisture graciously provided by Prof.
T. Franz, see \citet{trent}.

\subsection{Forest Fires data}
Consider the {\sf Forest Fires} data set publicly available from the UC Irvine Machine Learning Reposition. The sample size is $n=517$ and 
there are $8$ non-trivial explanatory variables related to the severity of a forest fire. The dependent variable is the burn area of the fire. 
Details and references can be found at {\texttt {http://archive.ics.uci.edu/ ml/datasets/Forest+Fires}}.
We regard the {\sf Forest Fires} data set as $\cal{M}$-complete because a forest fire is a chemical reaction with a lot of randomness that
cannot  be quantified well.  That is, there is so little about the process that is stable that it is unclear there is anything to estimate.
However, it is plausible that there is a model, necessarily highly complex, that might accurately encapsulate the behavior of forest fires under a 
variety of environmental conditions. Of course, such a model could be so complex that even though the data generator is   $\cal{M}$-complete 
it is nearly   $\cal{M}$-open.
Thus, generating predictions may be the most appropriate approach even they have a large variability.

For our analysis, we divide the data randomly into two subsets, one for training and one for validation. The training set contains $n_1=267$ data points and the 
validation set contains $n_2=250$ data points.   To generate a predictor, we assume the predictive analog of an additive model.
That is, we form eight univariate models, each using one of the explanatory variables, and then we stack the predictors they generate.
To generate each univariate model, we generated data-driven basis elements to use in a linear model. Each linear model has an associated
point predictor and these are weighted by their stacking coefficients.

We consider two classes of data-driven basis elements.  The first class is generated as discussed in Subsec. \ref{optchoice} using the Nadaraya-Watson
estimator found using the {\texttt{npreg()}} function in {\texttt{R}}.  The second class is generated using Gaussian process priors found using the
{\texttt{gausspr()}} function in {\texttt{R}}.  In both cases we used the default settings for the {\texttt{R}} functions
and we set the number of basis elements to find to be $J = 10$.   Unsurprisingly, $J_{opt}$ assumed values and the high end of its range,
8, 9, and 10, but most often 10.  Note that the value of $J_{opt}$ depends on which variable was being used and this process
is independent of the $m$ in the `sum to $m$' constraint.

We summarize our results in Fig.  \ref{fig:fig}.   The left hand panel in panel shows that when the Nadaraya-Watson estimator is used
to generate basis elements, then the optimal value of the constraint $m$ is $m_{opt}= .91$.  By contrast, the right hand panel
shows that when the Gaussian process prior is used, we find $m_{opt} = 1.28$.   
In this case generating basis elements by Nadaraya-Watson gives a better prediction, i.e. a much lower cumulative predictive error than the Gaussian process
prior method.  Since we used the same $J$ for the two estimators, we interpret our results in terms of a counterfactual:
the Nadaraya-Watson method leads to basis elements that generate subspaces that are closer to the subspace containing what the true function would be
if it existed.  Note also that both optimal values of $m$ are meaningfully different from one at least in terms of the predictive error they give.

\begin{figure}
\begin{subfigure}{.5\textwidth}
  \centering
  \includegraphics[width=.8\linewidth]{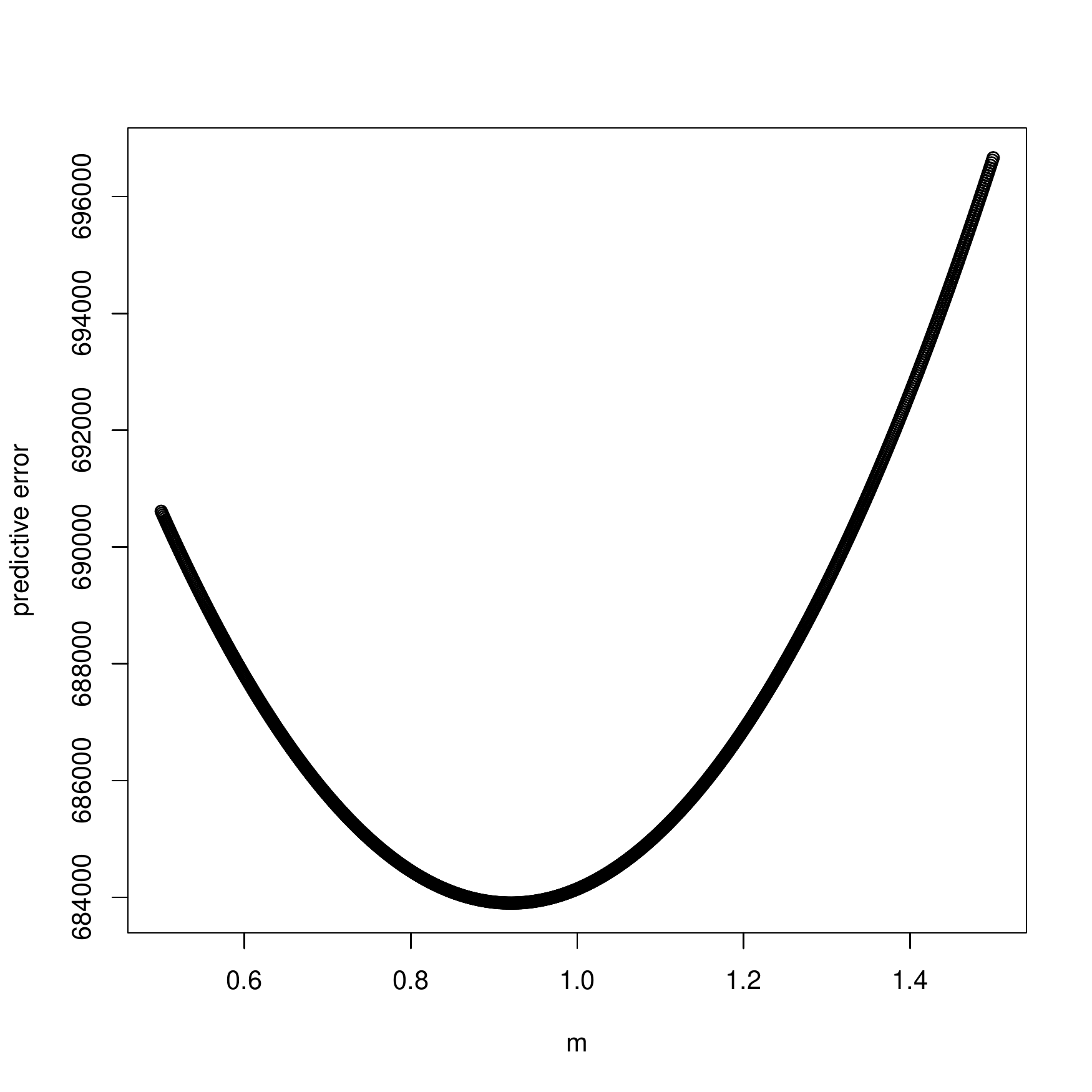}
  \caption{using {\texttt{npreg()}} }
  \label{fig:sfig1}
\end{subfigure}%
\begin{subfigure}{.5\textwidth}
  \centering
  \includegraphics[width=.8\linewidth]{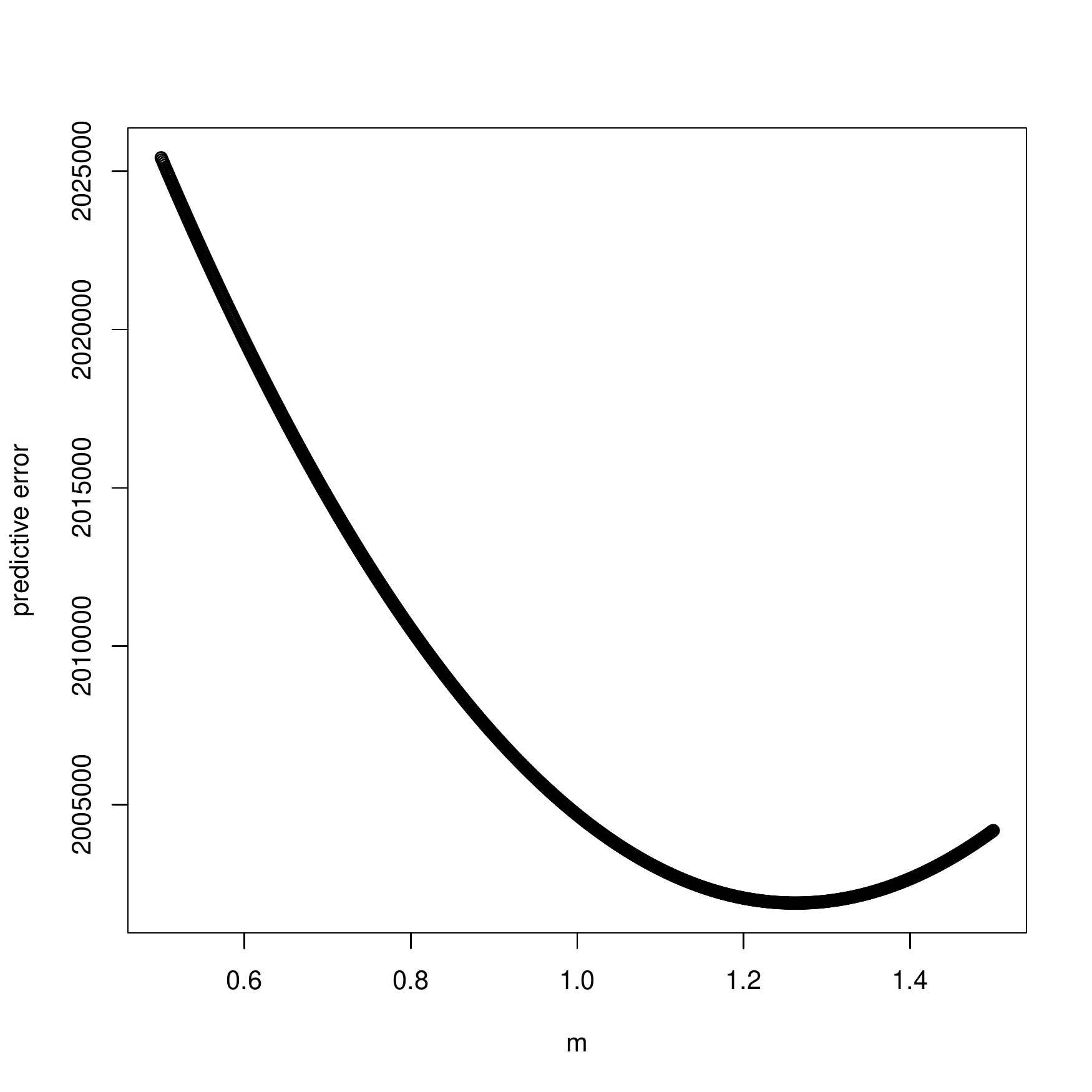}
  \caption{using {\texttt{gausspr()}}}
  \label{fig:sfig2}
\end{subfigure}
\caption{Plots of cumulative predictive error of stacking eight univariate predictors vs. $m$, the value of the constraint for
the {\sf Forest Fires} data.
Left:  Basis elements generated by Nadaraya-Watson.  Right:  Basis elements generated using Gaussian process priors
with a radial basis function kernel.}
\label{fig:fig}
\end{figure}
\subsection{Soil moisture data}\label{seceg}
As an  example that is $\cal{M}$-open, we consider the {\sf Soil Moisture} data set.   The response variable is an
interpolated form of the moisture in the topsoil.   There are six explanatory variables three for location
(two for location on a grid, one for elevation), two for soil electrical resistivity, and one for a standard `wetness index'
that is a function of elevation; see \citet{trent} for a detailed description.
The actual sample size is $18973$ but for computational convenience, we randomly selected $n=1000$
data points, dividing them into two sets of size 500, at random, for training and one for validation as before. 
We continued to set $J = 10$ and used Nadaraya-Watson and Gaussian process priors to generate basis elements.
This time we used a polynomial kernel in the Gaussian process prior because other kernels did not
permit convergence or gave the same result.
Again, we found $J_{opt}= 8, 9, 10$.  

The predictive error from stacking six univariate predictors for a range of
$m$ for basis elements generated using Nadaraya-Watson and Gaussian process priors are shown
in Fig. \ref{fig2:fig}.  It is seen that the predictive performance is nearly the same for both cases 
and, in particular, the optimal predictive errors are small and the
optimal constraint is nearly the same, $m_{opt} \approx 1$.  Comparing with the results for the
{\sf Forest Fires} data, we see that the better the predictive performance is, the closer to one
$m_{opt}$ is and that when the predictive performance is weaker values of $m_{opt}$ can
be either larger or smaller than one.  We suggest that had we chosen a smaller subset of the {\sf Soil Moisture} data
we would have found worse predictive performance and an $m_{opt}$ further from one.

\begin{figure}
\begin{subfigure}{.5\textwidth}
  \centering
  \includegraphics[width=.8\linewidth]{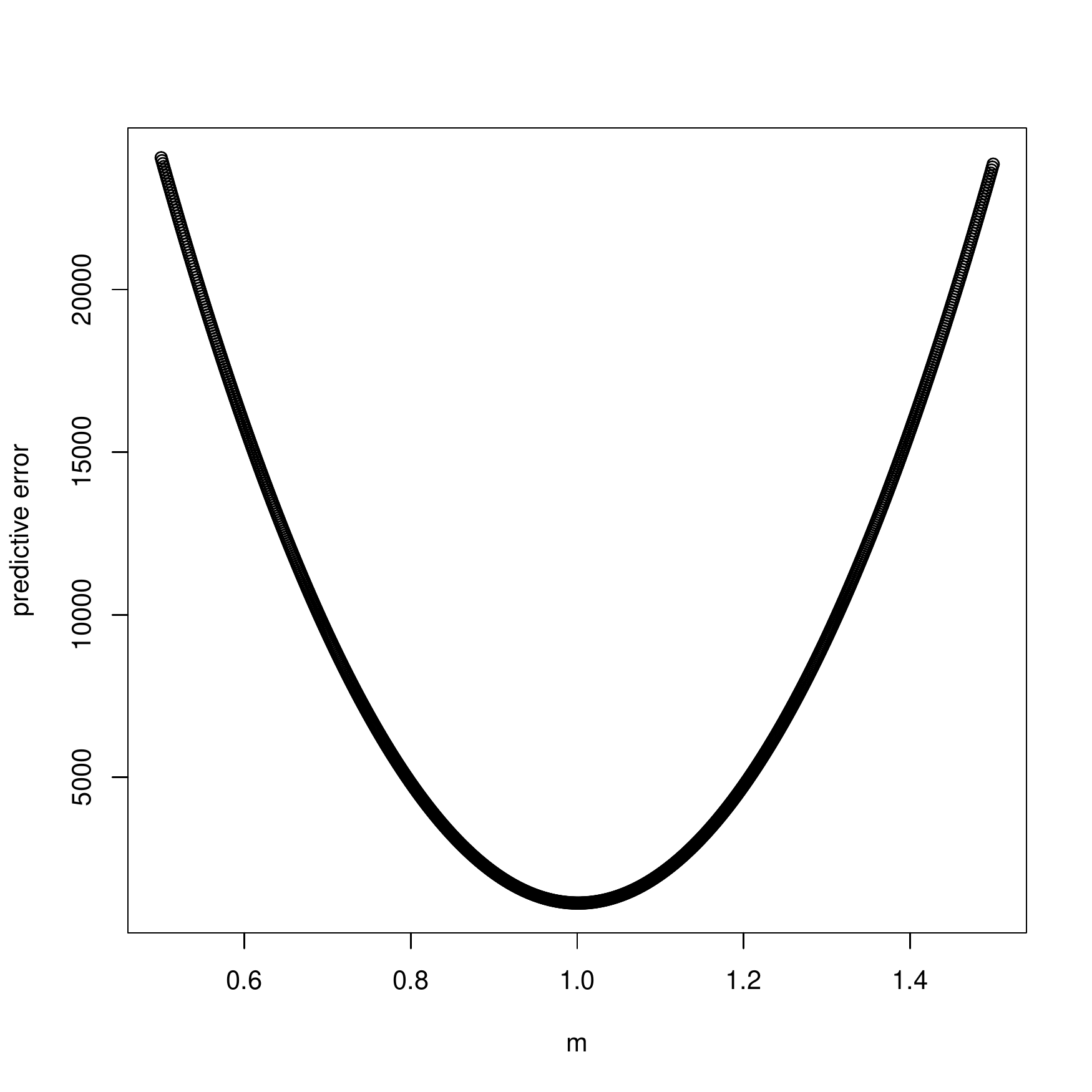}
  \caption{using {\texttt{npreg()}} }
  \label{fig2:sfig1}
\end{subfigure}%
\begin{subfigure}{.5\textwidth}
  \centering
  \includegraphics[width=.8\linewidth]{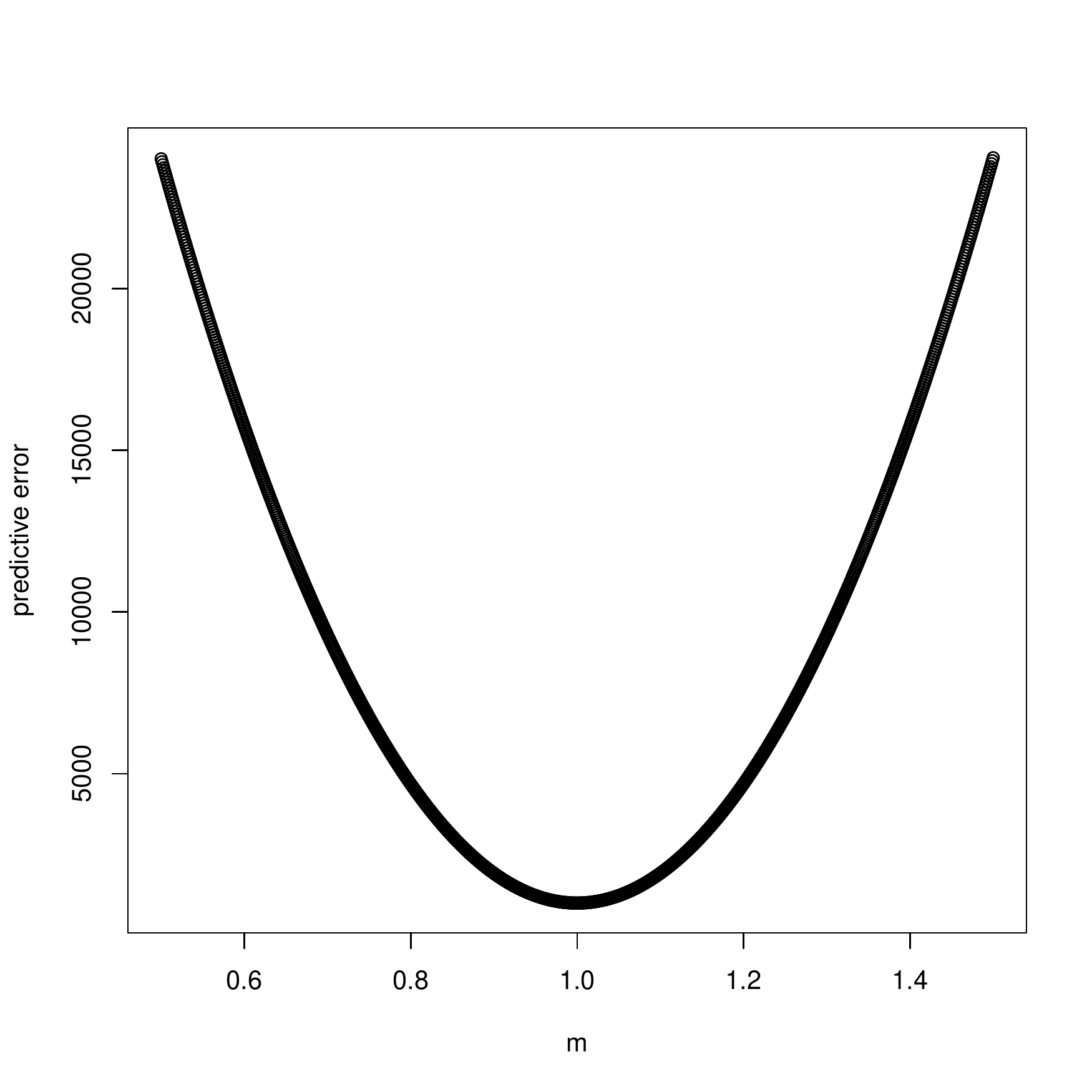}
  \caption{using {\texttt{gausspr()}}}
  \label{fig2:sfig2}
\end{subfigure}
\caption{ Plots of cumulative predictive error of stacking eight univariate predictors vs. $m$, the value of the constraint for
the {\sf Soil Moisture} data.
Left:  Basis elements generated by Nadaraya-Watson.  Right:  Basis elements generated using Gaussian process priors
with a polynomial kernel. }
\label{fig2:fig}
\end{figure}

\section{Discussion}
\label{Discussion}

Here we have formally established that leave-one-out cross-validation is 
asymptotically the optimal action in posterior risk for a variety of loss
functions.  We have used this to
justify the coefficients in a stacking predictor since they are based on a
cross-validation criterion.  Stacking is a model averaging
technique for prediction most effective when a true model is unavailable
or may not even exist.  We have investigated theoretically and
computationally the effect of different choices of constraints on
the coefficients of the stacking predictor and suggest that not imposing
any leads to the best result in the sense of minimizing predictive error.
In fact, our examples suggest that a `sum to one' constraint 
naturally emerges when the predictive error is small.
We comment that obvious extensions of our technique of proof show
that leave-$k$-out cross-validation by also be regarded as Bayes actions.

When the concept of a true model is problematic, it is natural to fall back on
predictive methods.  Indeed, it is possible that seeking a good predictor may
be more useful that modeling when the model is very complex.  For instance, if 
no simplification of the true model can
be readily identified a model average predictor may give better performance
in a mean squared error sense.  This seems to be the case for our
two examples here.

Finally, we recall that \citet{stone}  showed that the Akaike information criterion
(AIC) is asymptotically equivalent to leave-one-out cross-validation and that \citet{shao} shows 
these further asymptotically equivalent to the Mallows' $C_p$ criterion, the generalized
cross-validation, and the `$GIC_2$' criterion.  The implication from our main theorem
here is that all of these methods can also be regarded as asymptotically 
Bayes optimal.

\appendix

\section*{Appendix} 
\subsection*{Proof of the example in  Section 1}
Consider the two models $M_1: Y = x_1\beta_1 + \epsilon$ and $M_2: Y = x_2\beta_2 + \epsilon$ where
the explanatory variables are orthogonal i.e., $x_1' x_2 = 0$, $E(\epsilon) = 0$, and $Var(\epsilon) = \sigma^2$.  If we stack these two 
models with the sum to one constraint, by Corollary \ref{cor1}, we have
\begin{eqnarray}\label{A38}
\begin{aligned}
	\hat{w}_1 &\propto \sum_{i=1}^n \hat{e}_{i,2}^2 - \hat{e}_1' \hat{e}_2 = \sum_{i=1}^n \frac{e_{i,2}^2}{(1-h_{ii,2})^2} - \hat{e}_1' \hat{e}_2, \\
	\hat{w}_2 &\propto \sum_{i=1}^n \hat{e}_{i,1}^2 - \hat{e}_1' \hat{e}_2 = \sum_{i=1}^n \frac{e_{i,1}^2}{(1-h_{ii,1})^2} - \hat{e}_1' \hat{e}_2, 
\end{aligned}
\end{eqnarray} 	 
where $e_{i,j}$ and $h_{ii,j}$ are the ordinary residual and the leverage for case $i$ under model $M_j$, respectively.  

Since $h_{ii,j} = x_{ij}^2/\sum_{i=1}^n x_{ij}^2$, if $h_{ii,j}\rightarrow 0$ as $n\rightarrow\infty$, then  (\ref{A38}) becomes
\begin{eqnarray*}
\begin{aligned}
	\hat{w}_1 &\propto  \sum_{i=1}^n {e_{i,2}^2} - \hat{e}_1' \hat{e}_2 = (n-1)\sigma^2 - \hat{e}_1' \hat{e}_2 , \\
	\hat{w}_2 &\propto  \sum_{i=1}^n {e_{i,1}^2} - \hat{e}_1' \hat{e}_2 = (n-1)\sigma^2 - \hat{e}_1' \hat{e}_2. 
\end{aligned}
\end{eqnarray*} 	 
Combining with the sum to one constraint, this yields $\hat{w}_1 = \hat{w}_2 = 1/2$.

Now, if we stack $M_1$ and $M_2$ with a sum to two constraint, then from Theorem \ref{thm3.2n}  and similar arguments 
as above we have
\begin{eqnarray*}
\begin{aligned}
	\hat{w}_1 &\propto   (n-1)\sigma^2 + \sum_{i=1}^n \hat{y}_{2,(-i)}^2 - \hat{e}_1' \hat{e}_2 -  \sum_{i=1}^n \hat{y}_{1,(-i)}\hat{y}_{2,(-i)}, \\
	\hat{w}_2 &\propto  (n-1)\sigma^2 + \sum_{i=1}^n \hat{y}_{1,(-i)}^2 - \hat{e}_1' \hat{e}_2 -  \sum_{i=1}^n \hat{y}_{1,(-i)}\hat{y}_{2,(-i)}. 
\end{aligned}
\end{eqnarray*} 	 
So, if $\sum_{i=1}^n \hat{y}_{1,(-i)}^2 = \sum_{i=1}^n \hat{y}_{2,(-i)}^2$ then combining with the sum to two constraint
we get the weights now $\hat{w}_1 = \hat{w}_2 = 1$.

\subsection*{Proof of Theorem \ref{thm4.2}}
Without loss of generality, assume $\cal{M}$ and $\cal{M}^\prime$ are bases of $\langle {\cal{M}} \rangle = \langle  {\cal{M}}^\prime \rangle$
and hence $J=J'$.

For the  basis $\cal{M}$,  we have the decomposition
\begin{eqnarray}\label{36}
	y = y_1+y_2,
\end{eqnarray} 	 
where $y_1 = \sum_{j=1}^J  \alpha_j  f_j$, $y_2 = \sum_{j>J}  \langle y, e_j \rangle e_j$, and $\{e_1, e_2, \cdots\}$ is an orthonormal basis for ${\cal{H}}$.
Then,  Theorem \ref{thm3.3} gives
\begin{eqnarray*}
\begin{aligned}
	Q_{\min}^{\cal{M}} &= \left\| (y(x_1),\cdots,y(x_n))' - \sum_{j=1}^J  \hat{w}_j (f_j(x_1),\cdots,f_j(x_n))^\prime\right\|^2\\
	&= \left\| \left(\sum_{j=1}^J  \alpha_j  f_j (x_1)+ \sum_{j>J}  \langle y, e_j \rangle e_j(x_1),  \right.\right.\\
	&\quad\quad\quad\quad\quad\quad\quad\left.\cdots, \sum_{j=1}^J  \alpha_j  f_j (x_n)+ \sum_{j>J}  \langle y, e_j \rangle e_j(x_n)\right)^\prime\\
	 &\quad\quad- \left.\sum_{j=1}^J  \hat{w}_j (f_j(x_1),\cdots,f_j(x_n))' \right\|^2\\  
	&= \left\|  \sum_{j=1}^J (\alpha_j-\hat{w}_j)( f_j(x_1),\cdots,f_j(x_n) )'  + \sum_{j>J}  \langle y, e_j \rangle \left( e_j(x_1),\cdots,  e_j(x_n)\right)'  \right\|^2.
\end{aligned}
\end{eqnarray*} 	 
Since $f_j\in \langle\{ e_1,\cdots, e_J \}\rangle$ for $j=1,\cdots,J$ and $e_j\in \langle\{ e_1,\cdots, e_J \}\rangle^\perp$ for $j>J$, then
\begin{eqnarray}\label{37}
\begin{aligned}
	Q_{\min}^{\cal{M}} 
	&= \left\|  \sum_{j=1}^J (\alpha_j-\hat{w}_j)( f_j(x_1),\cdots,f_j(x_n) )'  \right\|^2 \\
	&	\quad\quad\quad+ \left\|  \sum_{j>J}  \langle y, e_j \rangle \left( e_j(x_1),\cdots,  e_j(x_n)\right)'  \right\|^2\\
	&= \left\|  \sum_{j=1}^J (\alpha_j-\hat{w}_j)( f_j(x_1),\cdots,f_j(x_n) )'  \right\|^2 + \| y_2 \|^2.
\end{aligned}
\end{eqnarray} 	 
Now, from (\ref{36}),
\begin{eqnarray*}\label{4.5n}
\begin{aligned}
	\langle y,  f_l \rangle  &= \sum_{j=1}^J \alpha_j \langle f_j,  f_l \rangle +  \sum_{j=J+1}^n \langle y, e_j \rangle  \langle e_j, f_l\rangle\\
	&= \sum_{j=1}^J \alpha_j \langle f_j,  f_l \rangle,
\end{aligned}
\end{eqnarray*} 
for $l=1,\cdots,J$. Therefore,
\begin{eqnarray*}
	\alpha  = T ^{-1} c  = \hat{w},
\end{eqnarray*} 
where $T$ and $c$ are given in Theorem \ref{thm3.3}, and hence (\ref{37}) yields
\begin{eqnarray*}\label{49n}
\begin{aligned}
	Q_{\min}^{\cal{M}}  =  \| y_2 \|^2.
\end{aligned}
\end{eqnarray*} 
This result does not depend on ${\cal{M}}$; therefore, $Q_{\min}^{\cal{M}}=Q_{\min}^{\cal{M}'}$.

\subsection*{Proof of Theorem \ref{thm4.4n}}
Without loss of generality, assume $\cal{M}$ is orthonormal. Then, for  $\cal{M}$, as in the proof of Theorem \ref{thm4.2}, we have
\begin{eqnarray}\label{4.51n}
\begin{aligned}
	Q_{\min}^{\cal{M}}  =  \| y_2 \|^2 = \sum_{j>J} \langle y, e_j \rangle^2.
\end{aligned}
\end{eqnarray} 	 
For  $\cal{N}$, we now have the decomposition 
\begin{eqnarray*}\label{4.52n}
	y = \sum_{j=1}^{J-1} \langle y, f_j \rangle f_j +  \sum_{j\ge J} \langle y, e_j \rangle e_j.
\end{eqnarray*} 
Then, from Theorem \ref{thm3.3}, 
\begin{eqnarray}\label{4.53n}
\begin{aligned}
	Q_{\min}^{\cal{N}} &= \left\| y - \sum_{j=1}^{J-1}  \hat{w}_j f_j \right\|^2\\
	&= \left\| \sum_{j=1}^{J-1} \langle y, f_j \rangle f_j +  \sum_{j\ge J} \langle y, e_j \rangle e_j - \sum_{j=1}^{J-1}  \hat{w}_j f_j \right\|^2\\
	&= \left\| \sum_{j=1}^{J-1} ( \langle y, f_j \rangle  - \hat{w}_j) f_j   +  \sum_{j\ge J}^n \langle y, e_j \rangle e_j \right\|^2\\
	&= \left\| \sum_{j=1}^{J-1} ( \langle y, f_j \rangle  - \hat{w}_j) f_j   \right\|^2 +  \sum_{j\ge J} \langle y, e_j \rangle ^2.
\end{aligned}
\end{eqnarray} 	 

The desired inequality in Theorem \ref{thm4.4n} is obtained from (\ref{4.51n}) and (\ref{4.53n}).

\bibliography{BayesSTK-arxiv}

\end{document}